\documentclass[a4paper,11pt,english]{article}
\usepackage{babel}
\usepackage[latin1]{inputenc}
\author{Andrea Tellini \\ \small{Departamento de Matem\'aticas} \\ \small{Universidad Aut\'onoma de Madrid} \\ \small{Campus de Cantoblanco, 28049 Madrid, Spain} \\ \texttt{\small{andrea.tellini@uam.es}}}
\title{\textbf{High multiplicity of positive solutions for superlinear indefinite problems with homogeneous Neumann boundary conditions}}
\date{\today}
\usepackage{amsmath,amsfonts,amssymb}
\usepackage{yhmath,mathrsfs,yfonts,arcs}
\usepackage{amsthm}
\usepackage{graphicx}
\usepackage{enumerate}

\usepackage{lineno}

\usepackage{hyperref}
\hypersetup{colorlinks=true,       
    linkcolor=red,          
    citecolor=green,        
    filecolor=magenta,      
    urlcolor=cyan           
    }

\newtheorem{theorem}{Theorem}[section]
\newtheorem{remark}[theorem]{Remark}
\newtheorem{proposition}[theorem]{Proposition}

\newtheorem{corollary}[theorem]{Corollary}

 \unitlength1in \setlength{\textheight}{8in}
 \setlength{\textwidth}{5in} \oddsidemargin0.1cm

 \DeclareMathOperator{\sign}{sgn}

\newcommand{\field}[1] {\mathbb{#1}}
\newcommand{\N}{\field{N}}

\newcommand{\R}{\field{R}}

\def\a{\alpha}

\def\e{\varepsilon}

\def\g{\gamma}
\def\G{\Gamma}
\def\l{\lambda}
\def\m{\mu}

\def\O{\Omega}
\def\p{\partial}

\def\S{\Sigma}

\def\t{\theta}
\def\T{\Theta}

\def\ov{\overline}
\def\un{\underline}
\def\ua{\uparrow}
\def\da{\downarrow}

\newcommand{\mc}{\mathcal}

\usepackage[textwidth=16.0cm,textheight=24cm]{geometry}

\begin{document}
\maketitle

\begin{center}
\emph{Truth for Giulio Regeni (1988--2016), a young researcher of my homeland}
\end{center}


\begin{abstract}
	We prove that a class of superlinear indefinite problems with homogeneous Neumann boundary conditions admits an arbitrarily high number of positive solutions, provided that the parameters of the problem are adequately chosen. The sign-changing weight in front of the nonlinearity is taken to be piecewise constant, which allows us to perform a sharp phase-plane analysis, firstly to study the sets of points reached at the end of the regions where the weight is negative, and then to connect such sets through the flow in the positive part. Moreover, we study how the number of solutions depends on the amplitude of the region in which the weight is positive, using the latter as the main bifurcation parameter and constructing the corresponding global bifurcation diagrams.
\end{abstract}

\smallskip
\noindent \textbf{Keywords:} Superlinear indefinite problems, high multiplicity, Neumann boundary conditions,  bifurcation diagrams, Poincar\'e maps.

\smallskip
\noindent \textbf{2010 MSC:} 34B18, 34C23, 35B30.

\setcounter{equation}{0}
\section{Introduction}
\label{section1}
We investigate existence and multiplicity of positive solutions for the following Neumann boundary value problem
\begin{equation}
\label{eq11}
  \left\{ \begin{array}{l}
  -u''=\l u+a(t)u^p \quad \hbox{for}\;\; t\in(0,1),\cr
  u'(0)=0=u'(1),\end{array}\right.
\end{equation}
where $p>1$ and $\l<0$ are constants and the weight $a(t)$ is a piecewise constant function of type
\begin{equation}
	\label{eq12}
  a(t):=\left\{ \begin{array}{ll} -c & \hbox{if}\;\;
  t\in(0,\alpha)\cup(1-\alpha,1), \cr
  b & \hbox{if}\;\; t\in[\alpha,1-\alpha]
  \end{array}\right.
\end{equation}
with $\a\in(0,1/2)$, $b> 0$ and $c>0$. We will look for strong solutions of \eqref{eq11} in the sense that they are of class $\mc C^1([0,1])\cap \mc C^2([0,\a)) \cap \mc C^2((\a,1-\a)) \cap \mc C^2((1-\a,1])$, which is the highest possible regularity, and, without further mention, we will consider only positive solutions throughout the whole paper.

Problems like \eqref{eq11}--\eqref{eq12}, where the nonlinearity is of superlinear type, since $p>1$, and the weight function changes sign, are known in the literature as \emph{superlinear indefinite}.
Such kind of problems, with Neumann (and even more general) boundary conditions have been widely treated in the last decades starting from \cite{BCN1,BCN2,ALG} (see also the references therein and \cite{AT,GRLG,LG} for the case of Dirichlet boundary conditions). In these works, by using a variety of mathematical techniques that go from variational methods up to bifurcation and continuation theory, necessary and sufficient conditions for existence and some (low) multiplicity results have been obtained.

Recently, in \cite{LTZ}, a new insight has been given to positive solutions of superlinear indefinite problems like the ones we study here. There, with the difference that Dirichlet inhomogeneous boundary conditions $u(0)=u(1)=M\in(0,+\infty]$ are considered (when $M=+\infty$ the boundary condition has to be understood in the limiting sense and the solutions are referred to as \emph{large} or \emph{blow-up solutions}), it has been proved that the structure of positive solutions can be extremely rich. Indeed, by using a topological shooting technique which, to the best of our knowledge, goes back to \cite{MPZ}, it has been shown that, when $\l$ is sufficiently negative, there exists a specific value of $b$, for which the problem possesses an arbitrarily high number of positive solutions. Moreover, by using $b$ as the main bifurcation parameter (an idea which goes back to \cite{LG}), the structure of the global bifurcation diagrams has been determined.

Another situation was also known to produce high multiplicity of positive solutions for superlinear indefinite problems: precisely, when the weight function $a(t)$ in \eqref{eq11} has $n\in\N^*$ components where it is positive, separated by regions where it is negative. Indeed, according to some numerical observations, it was conjectured in \cite{GRLG} that, for homogeneous Dirichlet boundary conditions and $\l$ sufficiently negative, such a problem admits $2^n-1$ positive solutions. This kind of multiplicity was then proved when $\l=0$ and the negative part of the weight is sufficiently large in \cite{GHZ}, by using a shooting technique (we also mention that the same results have been obtained for large solutions in \cite{BDP}), and, later, in \cite{BGH}, in the PDE case  by means of variational methods. Recently, in \cite{FZ15}, the use of topological degree has allowed the authors to obtain the same kind of results for more general nonlinearities and $\l\sim 0$.

In the case of Neumann boundary conditions, analogous results have been obtained in \cite{B} with shooting techniques and in \cite{FZ17} with the coincidence degree. This similarity in the behavior of superlinear indefinite problem with Dirichlet and Neumann boundary conditions, arises the natural question of whether high multiplicity results in the spirit of \cite{LTZ} can be obtained with the simple weight function of \eqref{eq12}, which has a unique positive component (observe that in such a case, both for Dirichlet and Neumann boundary conditions the results of \cite{GHZ,B,FZ15,FZ17} guarantee the existence of just $2^1-1=1$ positive solution).

In this work we will positively answer this question by using the same topological shooting technique of \cite{LTZ}, which firstly consists in studying separately the sets of points reached in the phase plane by all the solutions of the problem in $(0,\a)$ and $(1-\a,1)$, i.e. where the weight is negative, and then in connecting such sets through the flow in $(\a,1-\a)$, where the weight is positive. For this last point, we perform a careful analysis of the time maps that allow us to establish all the types of connections.

Here, however, contrarily to \cite{LTZ}, we consider homogeneous boundary conditions, which requires a sharper analysis of the solutions of the sublinear parts near $u=0$ (see Theorem \ref{th22}\eqref{th22iv}). In addition, this makes not clear how to find particular values of $b$ to get high multiplicity, as it was the case in \cite{LTZ}, and then let $b$ vary to obtain the structure of the bifurcation diagrams.

To overcome this problem, we use a new approach that consists in using $\a$ as the main bifurcation parameter, regulating in this way the amplitude of the region in which the weight is positive. Firstly, we obtain arbitrarily high multiplicity, when $\l$ is sufficiently negative, for the purely superlinear problem corresponding to $\a=0$ (we point out that similar results have been recently obtained with different techniques in \cite{BGT} for radial solutions in a ball); then, a singular perturbation allows us to obtain high multiplicity for $\a\sim 0$ (see Theorem \ref{th45}). 

Observe that, by integrating the differential equation in \eqref{eq11} and using the boundary conditions, we obtain a necessary condition for the existence of positive solutions, i.e. that $a(t)$ has to be positive on a subset of $(0,1)$ with positive measure. Thus, no solution can exist for $\a=1/2$. In Theorem \ref{th51}, we will show  how the several solutions that we obtain for $\a\sim 0$ are progressively lost as $\a$ increases up to reach the value $1/2$.

Moreover, we determine the structure of the bifurcation diagrams in $\a$. A remarkable novel result is that, when $\l$ is sufficiently negative, the bifurcation diagrams always exhibit several isolated bounded components (see Theorem \ref{th51} and Figure \ref{fig5}), whose number can be arbitrarily high. On the contrary, when $b$ is used as the main bifurcation parameter, this phenomenon is typically related to the presence of asymmetric weights, as shown in \cite{LT} for Dirichlet boundary conditions, and does not happen in the case of symmetric weights treated in \cite{LTZ}. Still, considering asymmetric weights in the case of Neumann boundary condition further increases the number of components, as a consequence of the breaking of secondary bifurcation points (see Remark \ref{re52}).

%
%

This work is distributed like follows: in Section \ref{section2}, we study the sublinear problems, i.e. Problem \eqref{eq11}--\eqref{eq12} in $(0,\a)$ and $(1-\a,1)$, where the weight is negative, while, in Section \ref{section3}, we consider the superlinear flow in $[\a,1-\a]$ and introduce the time maps that will be the key point for the construction of solutions. In Section \ref{section4}, we establish exact multiplicity results for the purely superlinear case $\a=0$ and then, through a singular perturbation, we obtain multiplicity results for $\a\sim 0$. Finally, in Section \ref{section5}, we prove some general multiplicity results and provide the structure of the global bifurcation diagrams in $\a$, briefly considering also the case of asymmetric weights.

\setcounter{equation}{0}
\section{The sublinear problem}
\label{section2}

This section is devoted to the study of Problem \eqref{eq11} in $(0,\a)$ and  $(1-\a,1)$, where the weight is negative and the differential equation of \eqref{eq11} reduces to
\begin{equation}
\label{eq20}
-u''(t)=\l u(t)-cu(t)^p.
\end{equation}
We start with the case of $(0,\a)$ and apply a shooting method, i.e. we consider the problem
\begin{equation}
\label{eq21}
  \left\{ \begin{array}{l}
  -u''=\l u-c u^p \quad \hbox{for}\;\; t\in(0,\a),\cr
  u'(0)=0, \quad u(0)=s>0\end{array}\right.
\end{equation}
and study all its positive solutions and their properties as $s$ varies. For every $s>0$, the solution of \eqref{eq21} blows up in finite time, since \eqref{eq20} satisfies the well-known Keller--Osserman condition (see \cite{K,O}), thus we need to guarantee that it is defined in the whole interval $[0,\a]$. This is done in the following proposition, where in addition we prove some important qualitative properties of \eqref{eq21}, like the monotonic dependence on $s$ of the solutions and their derivatives.

\begin{proposition}
\label{pr21}
There exists a unique $s_{\infty}$ such that the unique solution of Problem \eqref{eq21}, denoted by $u_s(t)$,  is defined in $[0,\a]$ if and only if $s\in(0,s_{\infty})$. Moreover:
\renewcommand{\theenumi}{\roman{enumi}}
\begin{enumerate}[(i)]
\item \label{pr21i} for $0<s<s_{\infty}$, $u_s(t)>0$ and $u_s'(t)>0$ for $t\in(0,\a]$;
\item \label{pr21ii} $\lim\limits_{s\da 0}u_s(\a)=0, \quad \lim\limits_{s\ua s_{\infty}}u_s(\a)=+\infty$;
\item \label{pr21iii} if $0<s_1<s_2<s_{\infty}$, then $u_{s_1}(t)<u_{s_2}(t)$ and $u_{s_1}'(t)<u_{s_2}'(t)$ for $t\in(0,\a]$.
\end{enumerate}
\end{proposition}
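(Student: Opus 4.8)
The plan is to reduce the whole statement to the analysis of a single time map, namely the blow-up time of $u_s$ as a function of $s$. First I would record the structural facts. Local existence and uniqueness of $u_s$ on a maximal interval is standard, since $f(u):=|\l|u+cu^p$ (recall $\l<0$) is locally Lipschitz. Because $f(u)>0$ for $u>0$ and $u_s'(0)=0$, the solution $u_s$ is convex and strictly increasing while it stays positive, so $u_s>s>0$ and $u_s'>0$ on $(0,\a]$; this already yields item \eqref{pr21i}, once existence on $[0,\a]$ is settled. Multiplying the equation by $u_s'$ and integrating gives the conserved energy
\[
\tfrac12(u_s')^2-\tfrac{|\l|}{2}u_s^2-\tfrac{c}{p+1}u_s^{p+1}=-\tfrac{|\l|}{2}s^2-\tfrac{c}{p+1}s^{p+1},
\]
and, since $u_s$ increases,
\[
u_s'=\sqrt{\Phi_s(u_s)},\qquad \Phi_s(u):=|\l|(u^2-s^2)+\tfrac{2c}{p+1}(u^{p+1}-s^{p+1}).
\]
Separating variables, the blow-up time (finite by the Keller--Osserman condition) is
\[
T(s):=\int_s^{+\infty}\frac{du}{\sqrt{\Phi_s(u)}},
\]
the integrand being integrable at $u=s$, where $\Phi_s$ vanishes linearly, and at $+\infty$, where $\Phi_s(u)\sim\frac{2c}{p+1}u^{p+1}$ with $(p+1)/2>1$. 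By construction $u_s$ is defined on the closed interval $[0,\a]$ if and only if $T(s)>\a$.

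The crux is then the monotone dependence of $T$ on $s$, which the scaling $u=sv$ makes transparent:
\[
T(s)=\int_1^{+\infty}\frac{dv}{\sqrt{|\l|(v^2-1)+\tfrac{2c}{p+1}\,s^{p-1}(v^{p+1}-1)}}.
\]
Since $p>1$, for each $v>1$ the integrand is strictly decreasing in $s$, so $T$ is strictly decreasing; monotone convergence gives $T(s)\to+\infty$ as $s\da0$ (the limiting integral $\int_1^\infty(|\l|(v^2-1))^{-1/2}\,dv$ diverges logarithmically at $+\infty$) and $T(s)\le C\,s^{-(p-1)/2}\to0$ as $s\to+\infty$. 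Hence there is a unique $s_\infty\in(0,+\infty)$ with $T(s_\infty)=\a$, and $T(s)>\a\iff s<s_\infty$, which proves the first assertion and completes item \eqref{pr21i}.

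For the monotonicity \eqref{pr21iii}, given $0<s_1<s_2<s_\infty$ I set $w:=u_{s_2}-u_{s_1}$, so that $w(0)=s_2-s_1>0$, $w'(0)=0$, and wherever $w>0$ one has $w''=|\l|w+c(u_{s_2}^p-u_{s_1}^p)>0$. If $w$ had a first zero $t_0\in(0,\a]$, then $w>0$ on $(0,t_0)$ would force $w''>0$ and hence $w'>0$ there (as $w'(0)=0$), making $w$ increasing and contradicting $w(t_0)=0<w(0)$; thus $w>0$, and integrating $w''>0$ from $0$ gives $w'>0$ on $(0,\a]$, i.e. $u_{s_1}<u_{s_2}$ and $u_{s_1}'<u_{s_2}'$ there. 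For \eqref{pr21ii}, continuous dependence on initial data on the fixed compact interval $[0,\a]$ gives $u_s\to0$ uniformly as $s\da0$, so $u_s(\a)\to0$; while as $s\ua s_\infty$ one has $T(s)\da\a$, so $u_{s_\infty}$ blows up precisely at $\a$, and for any $M>0$ there is $t_M<\a$ with $u_{s_\infty}(t_M)>M$, whence $u_s(t_M)>M$ for $s$ near $s_\infty$ by continuous dependence on $[0,t_M]$, and $u_s(\a)\ge u_s(t_M)>M$ by monotonicity in $t$; therefore $u_s(\a)\to+\infty$.

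The genuinely delicate step is this last limit: it requires controlling $u_s$ up to a time arbitrarily close to its blow-up, so one must invoke continuous dependence on time intervals that approach $\a$ from below while exploiting the exact characterization of $s_\infty$ as the threshold $T(s_\infty)=\a$. Everything else, including the monotonicity of $T$ that drives the whole argument, follows cleanly once the scaling $u=sv$ has been used.
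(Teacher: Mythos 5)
Your proposal is correct, and for the core of the statement it follows the same route as the paper: existence and uniqueness of $s_\infty$ come from the blow-up time map, its strict monotonicity in $s$ made transparent by the scaling $u=sv$, and its limits $+\infty$ and $0$ as $s\da 0$ and $s\ua+\infty$; likewise your convexity argument for (i) is the paper's phase-plane flow argument in analytic clothing, and your treatment of (ii) is a (welcome) fleshing-out of the paper's terse appeal to continuous dependence and the definition of $s_\infty$. The genuine difference is in (iii): you prove the comparison directly, setting $w:=u_{s_2}-u_{s_1}$ and using that $w''=-\l w+c\left(u_{s_2}^p-u_{s_1}^p\right)>0$ wherever $w>0$ (since $u\mapsto -\l u+cu^p$ is increasing on $u>0$), so that $w(0)>0$, $w'(0)=0$ rule out a first zero and force $w>0$, $w'>0$ on $(0,\a]$; the paper instead argues by contradiction via the uniqueness of positive solutions of the sublinear boundary value problems with data $u'(0)=0$, $u(\t)=U$ (respectively $u'(\t)=V$), quoting \cite{CC}, after first comparing $u_{s_1}''(0)$ with $u_{s_2}''(0)$ to order the derivatives near $t=0$. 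Your argument is more elementary and self-contained, needing only the sign structure of the nonlinearity, whereas the paper's buys robustness (it would survive nonlinearities for which the pointwise monotonicity trick fails but uniqueness for the sublinear problem still holds) at the price of an external uniqueness theorem. Two points you should make explicit if you write this up: the existence of $s_\infty$ uses the continuity of $T$ (e.g.\ by dominated convergence in the scaled integral), which you invoke only implicitly alongside strict monotonicity; and in the blow-up limit of (ii) one should note that $u_{s_\infty}$ is indeed defined on $[0,t_M]$ for each $t_M<\a=T(s_\infty)$, which is exactly what licenses continuous dependence on that compact interval.
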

\begin{proof}
(i) By introducing $v(t):=u'(t)$ and integrating the first order system for $(u(t),v(t))$ associated to \eqref{eq21}, it is easy to see that its solution, which is unique thanks to the Cauchy--Lipschitz theorem, lies on the integral curves defined by
\begin{equation}
\label{eq22}
v^2+\l u^2-\frac{2c}{p+1}u^{p+1}=\l s^2-\frac{2c}{p+1}s^{p+1},
\end{equation}
which have been represented in the phase plane $(u,v)$ in Figure \ref{fig21}, together with the associated flow.

\begin{figure}[ht]
\begin{center}
\includegraphics[scale=0.50]{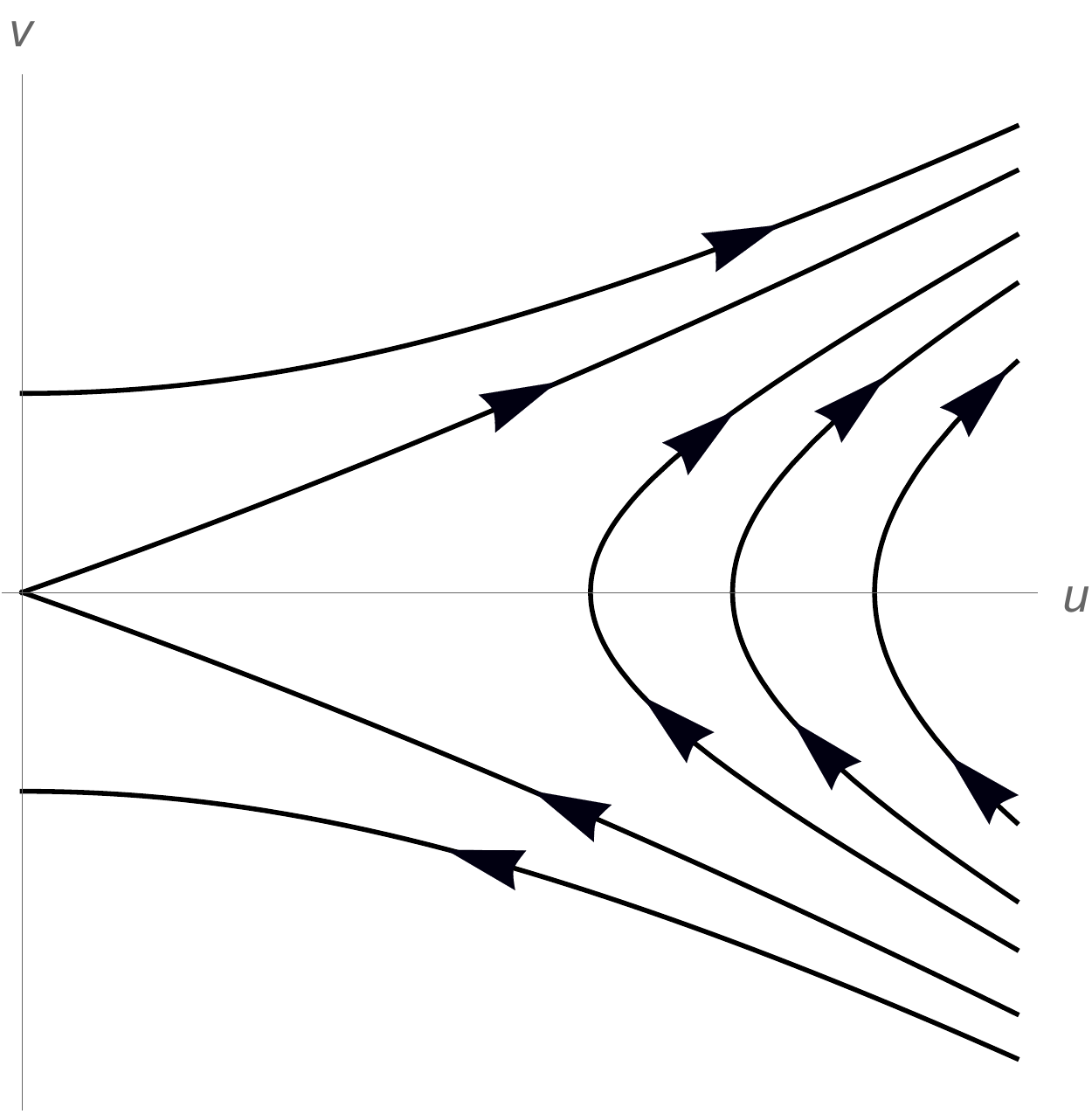} \\
\caption{Phase portrait associated to equation \eqref{eq20}.} \label{fig21}
\end{center}
\end{figure}

By analyzing the flow, it immediately follows that the solution of \eqref{eq21}, if defined, is increasing, thus positive, since it starts on the half-line $\{u>0,\, v=0\}$ (this, \emph{a posteriori}, proves (i)) and, due to \eqref{eq22}, satisfies
\begin{equation}
\label{eq23}
v(t)=\sqrt{-\l(u(t)^2-s^2)+\frac{2c}{p+1}(u(t)^{p+1}-s^{p+1})}.
\end{equation}
To show the existence of $s_{\infty}$, we introduce
\begin{align}
T_{\infty}(s)&:=\int_s^{+\infty}\frac{du}{\sqrt{-\l(u^2-s^2)+\frac{2c}{p+1}(u^{p+1}-s^{p+1})}} \notag\\
&=\int_1^{+\infty}\frac{d\xi}{\sqrt{-\l(\xi^2-1)+\frac{2c}{p+1}s^{p-1}(\xi^{p+1}-1)}}<+\infty, \label{eq24}
\end{align}
which, in view of \eqref{eq23}, measures the time needed for the solution of \eqref{eq21} to blow up at $t=T_{\infty}(s)$, i.e. to satisfy \eqref{eq20} in $t\in(0,T_{\infty}(s))$ and
\begin{equation}
\label{eq25}
\lim_{t\ua T_{\infty}(s)}u(t)=+\infty.
\end{equation}
From \eqref{eq24} it follows that $T_{\infty}(s)$ is continuous, decreasing in $s$ and satisfies
\begin{equation*}
\lim_{s\da0}T_{\infty}(s)=+\infty, \qquad \lim_{s\ua\infty}T_{\infty}(s)=0,
\end{equation*}
thus there exists a unique value of $s$, denoted by $s_{\infty}$, such that $T_{\infty}(s_{\infty})=\a$ and which possesses the desired properties.

(ii) Relations \eqref{pr21ii} follow from the continuous dependence theorem on initial conditions: the first relation by observing that, for $s=0$, Problem \eqref{eq21} admits the unique solution $u=0$, while the second relation by using the definition of $s_{\infty}$ and \eqref{eq25}. 

(iii) Finally, conditions \eqref{pr21iii} can be obtained from some comparison principles, as shown hereafter. Take $0<s_1<s_2<s_{\infty}$ and assume by contradiction that $u_{s_1}(\tilde{t})\geq u_{s_2}(\tilde{t})$ for some $\tilde{t}\in(0,\a]$. Then, by continuity, there exists $\t\in(0,\tilde{t}]$ such that $u_{s_1}(\t)=u_{s_2}(\t)=:U>0$ and, by the uniqueness of positive solutions of the boundary value problem
\begin{equation*}
  \left\{ \begin{array}{l}
  -u''=\l u-c u^p \quad \hbox{for}\;\; t\in(0,\t),\cr
  u'(0)=0, \quad u(\t)=U\end{array}\right.
\end{equation*}
(see, for example, \cite{CC}), we obtain $u_{s_1}=u_{s_2}$, which is a contradiction. As for the remaining one, we observe that
\begin{equation*}
u_{s_1}''(0)=-\l s_1+c s_1^p<-\l s_2+c s_2^p=u_{s_2}''(0),
\end{equation*}
and, as a consequence, $u_{s_1}'<u_{s_2}'$ in a right neighborhood of $t=0$. Now, if we assume that $u_{s_1}'(\tilde t)\geq u_{s_2}'(\tilde t)$ for some $\tilde{t}\in(0,\a]$, by continuity there exists $\t\in(0,\tilde t]$ such that $u_{s_1}'(\t)= u_{s_2}'(\t)=: V>0$ and, by the uniqueness of positive solutions of the boundary value problem
\begin{equation*}
  \left\{ \begin{array}{l}
  -u''=\l u-c u^p \quad \hbox{for}\;\; t\in(0,\t),\cr
  u'(0)=0, \quad u'(\t)=V\end{array}\right.
\end{equation*}
(see again \cite{CC}), we obtain once more the contradiction $u_{s_1}=u_{s_2}$.
%
\end{proof}
If we denote by $\S_0$ the set  of all the nonnegative solutions of \eqref{eq21}, then, according to the previous proposition, we have
\begin{equation*}
\S_0:=\{u(t): u  \text { solves \eqref{eq21} with } s\in[0,s_{\infty})\}.
\end{equation*}
Moreover, we introduce the following set in the phase plane $\R^2$
\begin{equation*}
\G_0:=\{(u(\a),u'(\a)): u\in\S_0\},
\end{equation*}
which will play a fundamental role in the construction of the solutions of \eqref{eq11}. The next result provides the key properties of this set that will be required hereafter.

\begin{theorem}
\label{th22}
There exists a function $y$ of class $\mc C^1([0,+\infty))\cap\mc C^{\infty}((0,+\infty))$ such that
\begin{enumerate}[(i)]
\item \label{th22i} $\G_0=\{\left(x,y(x)\right), \text{ for } x\geq 0\}$;
\item \label{th22ii} $y(0)=0$ and $y(x)>0$ for $x>0$;
\item \label{th22iii} $y'(x)>0$ for $x>0$;
\item \label{th22iv} $y'(0)=\sqrt{-\l}\tanh\left(\sqrt{-\l}\a\right)$;
\item \label{th22v} $\lim\limits_{x\to+\infty}\displaystyle\frac{y(x)}{x}=+\infty$.
\end{enumerate}
\end{theorem}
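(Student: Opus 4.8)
The plan is to realize $\G_0$ as the image of the positive $u$-axis under the time-$\a$ map of the planar system associated to \eqref{eq20}, and to read off the five properties from the monotonicity already established in Proposition \ref{pr21}. First I would set $x(s):=u_s(\a)$ and $w(s):=u_s'(\a)$ for $s\in[0,s_{\infty})$. By Proposition \ref{pr21}\eqref{pr21iii} both maps are strictly increasing; by \eqref{pr21ii} one has $x(0)=0$ and $x(s)\to+\infty$ as $s\ua s_{\infty}$; and by \eqref{pr21i} $w(s)>0$ for $s>0$, while $w(0)=0$ since the solution for $s=0$ is $u\equiv0$. Hence $x\colon[0,s_{\infty})\to[0,+\infty)$ is a continuous strictly increasing bijection; inverting it gives a continuous $s(x)$, and setting $y(x):=w(s(x))$ immediately yields \eqref{th22i} (the parameterization of $\G_0$ as a graph) and \eqref{th22ii}.

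For smoothness on $(0,+\infty)$ and for \eqref{th22iii} I would use the variational equation. Since $\l u-cu^p$ is $\mc C^{\infty}$ for $u>0$ and $u_s\geq s>0$ on $[0,\a]$ when $s>0$, the map $s\mapsto(x(s),w(s))$ is $\mc C^{\infty}$ on $(0,s_{\infty})$. Writing $\phi:=\p u_s/\p s$, it solves the linear problem $-\phi''=(\l-cp\,u_s^{p-1})\phi$ with $\phi(0)=1$, $\phi'(0)=0$; because $\l<0$ and $u_s>0$, the coefficient $\l-cp\,u_s^{p-1}$ is negative, so $\phi''>0$ wherever $\phi>0$, which forces $\phi>0$ and $\phi'>0$ on $(0,\a]$. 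Thus $x'(s)=\phi(\a)>0$ and $w'(s)=\phi'(\a)>0$; the inverse function theorem then makes $y=w\circ x^{-1}$ of class $\mc C^{\infty}$ on $(0,+\infty)$ and gives $y'(x)=w'(s)/x'(s)=\phi'(\a)/\phi(\a)>0$, which is \eqref{th22iii}.

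The delicate point, and the one I expect to be the main obstacle, is the $\mc C^1$-regularity up to $x=0$ together with the exact value \eqref{th22iv}: here the non-smoothness of $u^p$ at $u=0$ prevents a direct differentiation in $s$ at $s=0$. I would instead pass to the limit $s\da0$. From Proposition \ref{pr21}\eqref{pr21i}--\eqref{pr21ii}, $u_s(t)\leq u_s(\a)=x(s)\to0$, so $u_s\to0$ uniformly on $[0,\a]$ and the coefficient $\l-cp\,u_s^{p-1}\to\l$ uniformly (recall $p>1$). By continuous dependence, $\phi$ converges in $\mc C^1([0,\a])$ to the solution $\phi_0$ of $-\phi_0''=\l\phi_0$, $\phi_0(0)=1$, $\phi_0'(0)=0$, namely $\phi_0(t)=\cosh(\sqrt{-\l}\,t)$. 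Consequently $y'(x)=\phi'(\a)/\phi(\a)\to\sqrt{-\l}\tanh(\sqrt{-\l}\a)$ as $x\da0$; since $y$ is continuous with $y(0)=0$, the mean value theorem upgrades this to $y\in\mc C^1([0,+\infty))$ with $y'(0)$ equal to this value, proving \eqref{th22iv}. Equivalently, one may rescale $\tilde u_s:=u_s/s$, which solves $-\tilde u''=\l\tilde u-c\,s^{p-1}\tilde u^p$ and converges to $\cosh(\sqrt{-\l}\,\cdot)$, and compute $y(x)/x=\tilde u_s'(\a)/\tilde u_s(\a)$.

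Finally, \eqref{th22v} follows from the first integral \eqref{eq22} evaluated at $t=\a$: at a point $(x,y)\in\G_0$ coming from the parameter $s$,
\begin{equation*}
y^2=-\l x^2+\frac{2c}{p+1}x^{p+1}+\Big(\l s^2-\frac{2c}{p+1}s^{p+1}\Big).
\end{equation*}
Since $s=s(x)\in[0,s_{\infty})$ with $s_{\infty}<+\infty$, the parenthesis stays bounded, so dividing by $x^2$ gives $y^2/x^2=-\l+\tfrac{2c}{p+1}x^{p-1}+O(x^{-2})\to+\infty$ as $x\to+\infty$ (because $p>1$), which yields \eqref{th22v}.
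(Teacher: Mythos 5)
Your proposal is correct, and on the two technically substantive points it takes a genuinely different route from the paper. For the smoothness of the graph and property \eqref{th22iii}, the paper invokes the monotonicity of $s\mapsto(u_s(\a),u_s'(\a))$ from Proposition \ref{pr21}\eqref{pr21iii} (itself proved via uniqueness/comparison results) and then glues local increasing $\mc C^\infty$ graphs, whereas you work directly with the variational equation: the sign argument ($\phi''>0$ wherever $\phi>0$) gives $x'(s)=\phi(\a)>0$ and $w'(s)=\phi'(\a)>0$, and the inverse function theorem does the rest. This has the merit of making explicit the non-vanishing of $x'(s)$, which is precisely the nondegeneracy needed to invert $x(s)$ smoothly and which the paper's gluing step leaves somewhat implicit (strict monotonicity alone would not preclude $x'(s_0)=0$ at some point). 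For \eqref{th22iv}, the paper computes the one-sided difference-quotient limits $L=\lim_{s\da0}u_s(\a)/s=\cosh(\sqrt{-\l}\a)$ (via the time-map integral and the substitution $\xi=u/s$) and $L'=\lim_{s\da0}u_s'(\a)/s=\sqrt{-\l}\sinh(\sqrt{-\l}\a)$ (by sandwiching $v_s=u_s'$ between constant-coefficient problems using Kamke's comparison theorem), and sets $y'(0)=L'/L$; you instead let $s\da0$ in the variational equation, using the uniform convergence $u_s\to0$ (your appeal to $0<u_s(t)\leq u_s(\a)=x(s)\to0$ is exactly the sharper analysis near $u=0$ that the homogeneous condition demands), obtain $y'(x)\to\sqrt{-\l}\tanh(\sqrt{-\l}\a)$, and conclude by the mean value theorem. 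Your route is more self-contained (no Kamke, no integral computation), unifies \eqref{th22iii} and \eqref{th22iv} through the single object $\phi$, and delivers continuity of $y'$ at $0$, i.e. the full $\mc C^1([0,+\infty))$ regularity, directly; the paper's route yields in addition the first-order asymptotics of the shooting map itself, $(u_s(\a),u_s'(\a))\sim s\left(\cosh(\sqrt{-\l}\a),\sqrt{-\l}\sinh(\sqrt{-\l}\a)\right)$, which is slightly more information. Part \eqref{th22v} is essentially identical in both. The only loose end is your side remark on the rescaling $\tilde u_s=u_s/s$: passing to the limit in $-\tilde u''=\l\tilde u-cs^{p-1}\tilde u^p$ requires a uniform bound on $\tilde u_s$ over $[0,\a]$ (obtainable by comparison, since $\tilde u_s''\leq(-\l+cx(s)^{p-1})\tilde u_s$), but as this is offered only as an alternative to your complete main argument, it does not affect correctness.
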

\begin{proof}
(i)-(ii)-(iii) Consider the map
\begin{align*}
\mc P:[0,s_\infty)&\to\R^2, \\
s & \mapsto \left(u_s(\a),u_s'(\a)\right),
\end{align*}
where $u_s$ is the solution of \eqref{eq21}. $\mc P$ is continuous by the continuous dependence theorem of solutions with respect to initial data. Proposition \ref{pr21}\eqref{pr21ii} implies that $\pi_u\mc P([0,s_\infty))=\pi_u \G_0=[0,+\infty)$, where $\pi_u$ denotes the projection on the $u$ component of the phase plane $\R^2$.

Property \eqref{th22ii} is a direct consequence of Proposition \ref{pr21}\eqref{pr21i}. To show \eqref{th22i} and \eqref{th22iii}, we observe that $\mc P$ is $\mc{C}^{\infty}$ for $s\in(0,s_{\infty})$ by successive applications of the differentiable dependence theorem of solutions with respect to initial data. Moreover, Proposition \ref{pr21}\eqref{pr21iii} guarantees that, for $x>0$ (so that $s>0$), $\G_0$ can be locally parameterized, with respect to the $u$ variable in the phase plane, as the graph of an increasing function of class $\mc{C}^{\infty}$, and we obtain $y$ by gluing all these local graphs, with standard arguments from differential geometry.

(iv) To show that $y$ is differentiable also for $x=0$, we consider the parametrization of $\G_0$ given by $\mc P$ and, by using the chain rule and recalling that $y(0)=0$, we have
\begin{equation*}
	y'(0)=\frac{d}{ds}u_s'(\a)\big\rvert_{s=0}\left(\frac{d}{ds}u_s(\a)\big\rvert_{s=0}\right)^{-1}=\lim_{s\da 0}\frac{u_s'(\a)}{s}\left(\lim_{s\da 0}\frac{u_s(\a)}{s}\right)^{-1}.
\end{equation*}
Let us denote by $L'$ and $L$ the first and the second limit in the right-hand side of the previous relation, respectively. We start by computing $L$: from \eqref{eq23} we have
\begin{align*}
\a&=\int_0^\a\frac{u'(t)\, dt}{\sqrt{-\l(u(t)^2-s^2)+\frac{2c}{p+1}(u(t)^{p+1}-s^{p+1})}} \\
&=\int_s^{u_s(\a)}\frac{du}{\sqrt{-\l(u^2-s^2)+\frac{2c}{p+1}(u^{p+1}-s^{p+1})}} \\
&=\int_1^{\frac{u_s(\a)}{s}}\frac{d\xi}{\sqrt{-\l(\xi^2-1)+\frac{2c}{p+1}s^{p-1}(\xi^{p+1}-1)}},
\end{align*}
and, by taking $s\da 0$ in the last relation, we obtain
\begin{equation*}
\sqrt{-\l}\a=\int_1^L\frac{d\xi}{\sqrt{\xi^2-1}}=\log\left(\sqrt{L^2-1}+L\right),
\end{equation*}
which gives $L=\cosh(\sqrt{-\l}\a)$.

On the other hand, by differentiating \eqref{eq20} with respect to $t$, we obtain that $v_s(t):=u_s'(t)$ satisfies the following Cauchy problem
\begin{equation*}
  \left\{ \begin{array}{l}
  -v''=\left(\l-cp u_s(t)^{p-1}\right) v \quad \hbox{for}\;\; t\in(0,\a),\cr
  v(0)=0, \quad v'(0)=-\l s+c s^p.\end{array}\right.
\end{equation*}
By Proposition \ref{pr21}\eqref{pr21i} we have that $v_s(t)>0$ and $0<u_s(t)<u_s(\a)$ for all $t\in(0,\a)$. Therefore, $v_s$ satisfies the following differential inequalities
\begin{equation*}
-\l v(t)\leq v''(t)\leq\left(-\l+cp u_s(\a)^{p-1}\right) v(t),
\end{equation*}
which, by Kamke's comparison theorem (see \cite{Ka}), entail that
\begin{equation}
\label{eq26}
\un v(\a)\leq v_s(\a)\leq\ov v(\a),
\end{equation}
where $\un v $ and $\ov v$ are, respectively, the solutions of the linear Cauchy problems, considered for $t\in(0,\a)$,
\begin{equation*}
  \left\{ \begin{array}{l}
  -v''=\l v, \cr
  v(0)=0, \quad v'(0)=-\l s+c s^p\end{array}\right. 
  \quad \text{ and } \quad 
    \left\{ \begin{array}{l}
  -v''=(\l-cp u_s(\a)^{p-1}) v, \cr
  v(0)=0, \quad v'(0)=-\l s+c s^p,\end{array}\right.
\end{equation*}
i.e.
\begin{align*}
\un v(t)&=\frac{-\l s+cs^p}{\sqrt{-\l}}\sinh\left(\sqrt{-\l} t\right), \\
 \ov v(t)&=\frac{-\l s+cs^p}{\sqrt{-\l+cpu_s(\a)^{p-1}}}\sinh\left(\sqrt{-\l+cpu_s(\a)^{p-1}} t\right).
\end{align*}
Recalling Proposition \ref{pr21}\eqref{pr21ii} and \eqref{eq26}, we obtain $L'=\sqrt{-\l}\sinh\left(\sqrt{-\l}\a\right)$, thus $y'(0)=L'/L=\sqrt{-\l}\tanh\left(\sqrt{-\l}\a\right)$, which concludes the proof of \eqref{th22iv}.

(v) We evaluate \eqref{eq22} at $t=\a$, obtaining
\begin{equation*}
y(x(s))^2+\l x(s)^2-\frac{2c}{p+1}x(s)^{p+1}=\l s^2-\frac{2c}{p+1}s^{p+1},
\end{equation*}
where we have set $x(s):=u_s(\a)$. By dividing this relation by $x(s)^2$, taking the limit as $s\ua s_\infty$ and recalling Proposition \ref{pr21}\eqref{pr21ii}, we have that the right-hand side goes to $0$ and \eqref{th22v} follows.
\end{proof}

We now consider the Cauchy sublinear problem in $(1-\a,1)$, which reads
\begin{equation}
\label{eqsub2}
  \left\{ \begin{array}{l}
  -u''=\l u-c u^p \quad \hbox{for}\;\; t\in(1-\a,1),\cr
  u'(1)=0, \quad u(1)=s>0,\end{array}\right.
\end{equation}
and we have this immediate counterpart of Proposition \ref{pr21}.

\begin{corollary}
\label{co23}
The unique solution of Problem \eqref{eqsub2}, denoted by $u_s(t)$, is defined in $[1-\a,1]$ if and only if $s\in(0,s_{\infty})$, where $s_{\infty}$ is the one of Proposition \ref{pr21}. Moreover:
\renewcommand{\theenumi}{\roman{enumi}}
\begin{enumerate}[(i)]
\item \label{co23i} for $0<s<s_{\infty}$, $u_s(t)>0$ and $u_s'(t)<0$ in $[1-\a,1)$;
\item \label{co23ii} $\lim\limits_{s\da 0}u_s(\a)=0, \qquad \lim\limits_{s\ua s_{\infty}}u_s(\a)=+\infty$;
\item \label{co23iii} if $0\!<\!s_1\!<\!s_2\!<\!s_{\infty}$, then $u_{s_1}(t)<u_{s_2}(t)$ and $u_{s_1}'(t)>u_{s_2}'(t)$ for $t\in[1-\a,1)$.
\end{enumerate}
\end{corollary}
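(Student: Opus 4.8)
The plan is to exploit the reflection symmetry $t\mapsto 1-t$, which carries the interval $(1-\a,1)$ onto $(0,\a)$ and sends the data at $t=1$ to the data at $t=0$, thereby reducing the entire statement to Proposition \ref{pr21}. Concretely, given a solution $u_s$ of \eqref{eqsub2}, I would set $w(\tau):=u_s(1-\tau)$ for $\tau\in[0,\a]$. A direct computation gives $w'(\tau)=-u_s'(1-\tau)$ and $w''(\tau)=u_s''(1-\tau)$, so that $-w''=\l w-cw^p$ on $(0,\a)$, while the boundary data become $w(0)=u_s(1)=s$ and $w'(0)=-u_s'(1)=0$. Hence $w$ solves precisely Problem \eqref{eq21}, and by the Cauchy--Lipschitz uniqueness invoked in Proposition \ref{pr21}, $w$ coincides with the solution constructed there. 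Equivalently, $u_s(t)=w(1-t)$: the solution of \eqref{eqsub2} is the mirror image of the solution of \eqref{eq21}.

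Once this correspondence is established, each assertion follows by transporting the corresponding property of Proposition \ref{pr21} through the reflection, the only point requiring care being the sign flip in the derivative, $u_s'(t)=-w'(1-t)$. The definability statement is immediate: $u_s$ exists on $[1-\a,1]$ if and only if $w$ exists on $[0,\a]$, which by Proposition \ref{pr21} happens exactly when $s\in(0,s_\infty)$, with the very same $s_\infty$. For \eqref{co23i}, positivity of $u_s$ on $[1-\a,1)$ is inherited from $w>0$, while $u_s'(t)=-w'(1-t)<0$ because $w'>0$ on $(0,\a]$ by Proposition \ref{pr21}\eqref{pr21i} (here $1-t$ ranges in $(0,\a]$ as $t$ ranges in $[1-\a,1)$). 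For the limits \eqref{co23ii}, I would read them at the left endpoint $t=1-\a$, where $u_s(1-\a)=w(\a)$, so that the two limits coincide with those of Proposition \ref{pr21}\eqref{pr21ii}.

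Finally, for the monotonicity \eqref{co23iii}, write $u_{s_i}(t)=w_{s_i}(1-t)$ for $i=1,2$. The ordering $u_{s_1}<u_{s_2}$ is exactly $w_{s_1}<w_{s_2}$, which is Proposition \ref{pr21}\eqref{pr21iii}. The derivative ordering is where the reflection reverses the inequality: from $u_{s_i}'(t)=-w_{s_i}'(1-t)$ and the fact that $w_{s_1}'<w_{s_2}'$ on $(0,\a]$, one obtains $u_{s_1}'(t)=-w_{s_1}'(1-t)>-w_{s_2}'(1-t)=u_{s_2}'(t)$, which is the claimed $u_{s_1}'>u_{s_2}'$. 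Since the whole argument is a change of variables followed by an appeal to already-proved facts, I do not anticipate any genuine obstacle; the only thing to watch is consistently tracking the sign change introduced by differentiating the reflected variable, so as not to mis-state the direction of the monotonicity of the derivatives.
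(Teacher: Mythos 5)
Your proposal is correct and follows essentially the same route as the paper: the reflection $t\mapsto 1-t$ reducing \eqref{eqsub2} to \eqref{eq21}, with the properties of Proposition \ref{pr21} transported back while tracking the sign reversal of the derivative. The paper states this in two lines; you simply spell out the same argument in full detail, including the correct reading of \eqref{co23ii} at $t=1-\a$.
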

\begin{proof}
Performing the change of variables $\tilde u(t)=u(1-t)$ for $t\in[1-\a,1]$, we have that there is a one-to-one correspondence between \eqref{eqsub2} and \eqref{eq21}, with the unique difference of an opposite sign of the first derivatives with respect to time. As a consequence, the result immediately follows from Proposition \ref{pr21}.
\end{proof}

In analogy with the notation used above, we introduce 
\begin{align*}
\S_1&:=\{u(t): u  \text { solves \eqref{eqsub2} with } s\in[0,s_{\infty})\}, \\
\G_1&:=\{(u(1-\a),u'(1-\a)): u\in\S_1\},
\end{align*}
and, by performing the same change of variables as in the proof of Corollary \ref{co23}, we have

\begin{corollary}
\label{co24}
If $y$ is the function constructed in Theorem \ref{th22}, then
\begin{equation*}
\G_1=\{(x,-y(x)), \text{ for } x\geq 0\}.
\end{equation*}
\end{corollary}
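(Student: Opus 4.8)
The plan is to reuse the reflection $\tilde u(t):=u(1-t)$ already employed in the proof of Corollary \ref{co23}, and to track how it acts on the endpoint data that define $\G_1$. First I would fix $s\in[0,s_\infty)$, let $u$ denote the solution of \eqref{eqsub2} with $u(1)=s$, and set $\tilde u(t):=u(1-t)$ for $t\in[0,\a]$. A direct differentiation gives $\tilde u'(t)=-u'(1-t)$ and $\tilde u''(t)=u''(1-t)$, so that $\tilde u$ satisfies $-\tilde u''=\l\tilde u-c\tilde u^p$ on $(0,\a)$ together with $\tilde u(0)=u(1)=s$ and $\tilde u'(0)=-u'(1)=0$. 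Hence $\tilde u$ is precisely the solution $u_s$ of \eqref{eq21} studied in Proposition \ref{pr21} and Theorem \ref{th22}.

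The key step is then to read off the two coordinates of the point of $\G_1$ attached to $u$, namely $(u(1-\a),u'(1-\a))$, in terms of $u_s$ evaluated at $\a$. Evaluating the reflection at $t=\a$ yields $u(1-\a)=\tilde u(\a)=u_s(\a)$ for the first coordinate, while the sign flip in the derivative gives $u'(1-\a)=-\tilde u'(\a)=-u_s'(\a)$ for the second. By Theorem \ref{th22}\eqref{th22i} we have $u_s'(\a)=y(u_s(\a))$, so, writing $x:=u_s(\a)$, the point of $\G_1$ coming from $u$ is exactly $(x,-y(x))$.

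It remains only to check that $x$ sweeps out the whole half-line $[0,+\infty)$. This follows because $s\mapsto u_s(\a)$ is continuous with $\lim_{s\da 0}u_s(\a)=0$ and $\lim_{s\ua s_\infty}u_s(\a)=+\infty$ by Proposition \ref{pr21}\eqref{pr21ii}, so as $s$ ranges over $[0,s_\infty)$ the value $x$ covers $[0,+\infty)$. Combining the three observations gives $\G_1=\{(x,-y(x)):x\ge 0\}$, as claimed. I do not expect any genuine obstacle here: the argument is a bookkeeping exercise in the reflection, and the only point requiring a moment's care is the sign of the velocity coordinate, which is reversed by $t\mapsto 1-t$ and accounts for the minus sign distinguishing $\G_1$ from $\G_0$.
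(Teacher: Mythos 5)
Your proposal is correct and follows essentially the same route as the paper: the paper's proof of Corollary \ref{co24} consists precisely in applying the reflection $\tilde u(t)=u(1-t)$ from the proof of Corollary \ref{co23}, which identifies solutions of \eqref{eqsub2} with solutions of \eqref{eq21} up to a sign flip in the derivative, so that endpoint data $(u(1-\a),u'(1-\a))$ become $(u_s(\a),-u_s'(\a))=(x,-y(x))$. Your write-up merely makes explicit the bookkeeping (uniqueness via Cauchy--Lipschitz, the identification $u_s'(\a)=y(u_s(\a))$ from Theorem \ref{th22}\eqref{th22i}, and the range of $x$ via Proposition \ref{pr21}\eqref{pr21ii}) that the paper leaves implicit.
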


Since we will use $\a$ as a parameter in the following, we conclude this section by establishing some properties of the function $y(x,\a)$, the one given by Theorem \ref{th22}, where we have explicitly pointed out its dependence also with respect to this parameter.

\begin{proposition}
\label{pr25}
(i) The function $y(x,\a)$ converges to $0$ as $\a\da 0$ uniformly for $x$ in compact sets of $\R^+$. Moreover, for $x>0$, it is differentiable with respect to $\a\in[0,+\infty)$ and satisfies
\begin{equation}
\label{dydalpha}
\frac{\p y}{\p \a}\bigg\rvert_{\a=0}=-\l x+cx^p>0.
\end{equation}
(ii) The function $\frac{\p y(x,\a)}{\p x}$ converges to $0$ as $\a\da 0$ uniformly for $x$ in compact sets of $\R^+$. Moreover, for $x>0$, it is differentiable with respect to $\a\in[0,+\infty)$ and satisfies
\begin{equation}
	\label{dy'dalpha}
		\frac{\p}{\p \a}\left(\frac{\p y}{\p x}\right)\bigg\rvert_{\a=0}=-\l +cpx^{p-1}>0.
\end{equation}
(iii) The function $\frac{\p^2 y(x,\a)}{\p x^2}$ converges to $0$ as $\a\da 0$ uniformly for $x$ in compact sets of $\R^+$. Moreover, for $x>0$, it is differentiable with respect to $\a\in[0,+\infty)$ and satisfies
\begin{equation*}
\label{dy''dalpha}
\frac{\p}{\p \a}\left(\frac{\p^2 y}{\p x^2}\right)\bigg\rvert_{\a=0}= cp(p-1)x^{p-2}>0.
\end{equation*}
In particular, for every $x\in\R^+$, $y''(x)>0$  for $\a\sim 0$.
\begin{proof}
(i) Consider $x>0$ and recall that $y(x,\a)$ is the value of $u'(\a)$, with $u(t)$ being the solution of
\begin{equation}
\label{eqxfisso}
  \left\{ \begin{array}{l}
  -u''=\l u-c u^p \quad \hbox{for}\;\; t\in(0,\a),\cr
  u'(0)=0, \quad u(\a)=x.\end{array}\right.
\end{equation}
By performing the change of variable $\tilde u(\tilde t)=u(t)$, where $t=\a\tilde t$, we obtain from \eqref{eqxfisso} that $\tilde u$ satisfies
\begin{equation}
\label{eqxfissoscaled}
  \left\{ \begin{array}{l}
  -\tilde u''=\a^2\left(\l \tilde u-c \tilde u^p\right) \quad \hbox{for}\;\; \tilde t\in(0,1),\cr
  \tilde u'(0)=0, \quad \tilde u(1)=x,\end{array}\right.
\end{equation}
where $'$ now denotes $d/d\tilde t$. Moreover, we have
\begin{equation}
\label{eqderivative}
\frac{du}{dt}(\a)=\frac{1}{\a}\frac{d\tilde u}{d\tilde t}(1).
\end{equation}
Thanks to the differentiable dependence theorem  with respect to parameters applied to Problem \eqref{eqxfissoscaled}, $\tilde u$ and its derivatives with respect to $\tilde t$ depend differentiably on $\a$, thus $\tilde u(\tilde t, \a)$ converges, as $\a\da 0$, uniformly in $\tilde t\in[0,1]$ to the unique solution of \eqref{eqxfissoscaled} for $\a=0$, i.e. the constant function $\tilde u(\tilde t)=x$. This implies the locally uniform convergence of $y(x,\a)$ to $0$  as $\a\da 0$.

From \eqref{eqderivative} and again the differentiable dependence theorem,  the function $\a\mapsto\frac{du}{dt}(\a)$ is differentiable for $\a>0$. For the differentiability in $\a=0$, after setting $y(x,0)=0$ by continuity, observe that
\begin{equation*}
\frac{\p y}{\p \a}\bigg\rvert_{\a=0}=\lim_{\a\da 0}\frac{y(x,\a)-y(x,0)}{\a}=\lim_{\a\da 0}\frac{\tilde u'(1)}{\a^2}=\lim_{\a\da 0}\frac{\int_0^1\tilde u''(\tilde t)\,d\tilde t}{\a^2}=-\l x+cx^p,
\end{equation*}
as desired.

(ii) By differentiating \eqref{eqxfisso} with respect to $x$, we have that $\frac{\p y(x,\a)}{\p x}=\xi'(\a)$,
where $\xi$ is the unique solution of
\begin{equation*}
	\label{eqlinearizzata}
	\left\{ \begin{array}{l}
		-\xi''=\left(\l -cp u^{p-1}\right)\xi \quad \hbox{for}\;\; t\in(0,\a),\cr
		\xi'(0)=0, \quad \xi(\a)=1\end{array}\right.
\end{equation*}
with $u$ being the solution of \eqref{eqxfisso}. After the same change of variables as above, we obtain that $\frac{\p y(x,\a)}{\p x}=\frac{\tilde\xi'(1)}{\a}$, where $\tilde\xi$ solves 
\begin{equation}
	\label{eqlinearizzatascaled}
	\left\{ \begin{array}{l}
		-\tilde\xi''=\a^2\left(\l -cp \tilde u^{p-1}\right)\tilde\xi \quad \hbox{for}\;\; \tilde t\in(0,1),\cr
		\tilde\xi'(0)=0, \quad \tilde\xi(1)=1\end{array}\right.
\end{equation}
with $\tilde u$ being the solution of \eqref{eqxfissoscaled}. Reasoning as above, we have that $\tilde\xi$ converges, uniformly in $[0,1]$, to the constant function $1$ as $\a\da 0$ and
\begin{equation}
\label{eq:2.16}
	\frac{\tilde\xi'(1)}{\a}=\frac{1}{\a}\int_0^1\tilde\xi''(\tilde t)\,d\tilde t=\a\int_0^1\left(-\l+cp\tilde u^{p-1}(\tilde t)\right)\tilde\xi(\tilde t)\,d\tilde t,
	\end{equation}
	which converges to $0$ as $\a\to 0$. The differentiability of $\a\mapsto\p y/\p x$ follows from \eqref{eq:2.16}, recalling that $\frac{\p y(x,\a)}{\p x}=\frac{\tilde\xi'(1)}{\a}$. Finally, \eqref{dy'dalpha} can be obtained by differentiating directly the right-hand side of \eqref{eq:2.16}, observing that $\p_\a\tilde u(\tilde t)$ and $\p_\a\tilde \xi(\tilde t)$, which respectively  satisfy the boundary value problems obtained by differentiating \eqref{eqxfissoscaled} and \eqref{eqlinearizzatascaled} with respect to $\a$, converge  to $0$ uniformly for $\tilde t\in[0,1]$ as $\a\to 0$.
	
	(iii) The proof of (iii) follows the same lines of that of (ii), by observing that $\frac{\p^2 y(x,\a)}{\p x^2}=\frac{\tilde\eta'(1)}{\a}$, where $\tilde{\eta}$ solves
	\begin{equation*}
	\label{eqeta}
	\left\{ \begin{array}{l}
	-\tilde\eta''=\a^2\left[-cp\left(p-1\right) \tilde u^{p-2}\tilde\xi^2+\left(\l -cp \tilde u^{p-1}\right)\tilde\eta\right] \quad \hbox{for}\;\; \tilde t\in(0,1),\cr
	\tilde\eta'(0)=0, \quad \tilde\eta(1)=0,\end{array}\right.
	\end{equation*}
	where $\tilde u$ and $\tilde{\xi}$ are solutions of \eqref{eqxfissoscaled} and \eqref{eqlinearizzatascaled} respectively.
\end{proof}
\end{proposition}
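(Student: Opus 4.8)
The plan is to treat all three parts through a single change of scale that turns the thin interval $(0,\alpha)$ into the fixed interval $(0,1)$, at the price of a small factor $\alpha^2$ in front of the reaction term. Fix $x>0$ and recall from Theorem \ref{th22} that $y(x,\alpha)=u'(\alpha)$, where $u$ solves $-u''=\lambda u-cu^p$ on $(0,\alpha)$ with $u'(0)=0$ and $u(\alpha)=x$. Setting $\tilde t=t/\alpha$ and $\tilde u(\tilde t):=u(\alpha\tilde t)$, the rescaled function solves $-\tilde u''=\alpha^2(\lambda\tilde u-c\tilde u^p)$ on $(0,1)$ with $\tilde u'(0)=0$ and $\tilde u(1)=x$, and one has the key identity $u'(\alpha)=\tilde u'(1)/\alpha$. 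By the differentiable dependence theorem in the parameter $\alpha$, the rescaled solution and its $\tilde t$-derivatives depend smoothly on $\alpha$ and converge, as $\alpha\downarrow 0$, uniformly on $[0,1]$ to the solution of the limit problem, namely the constant $\tilde u\equiv x$. This uniform convergence is the engine behind every estimate below.

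For part (i) I would integrate the rescaled equation: since $\tilde u'(0)=0$, we get $\tilde u'(1)=\int_0^1\tilde u''\,d\tilde t=-\alpha^2\int_0^1(\lambda\tilde u-c\tilde u^p)\,d\tilde t$, so that $y(x,\alpha)=\tilde u'(1)/\alpha=O(\alpha)$ and hence $y\to 0$ locally uniformly in $x$. For the one-sided derivative at the endpoint I compute $\partial_\alpha y|_{\alpha=0}=\lim_{\alpha\downarrow 0}y(x,\alpha)/\alpha=\lim_{\alpha\downarrow 0}\int_0^1(-\lambda\tilde u+c\tilde u^p)\,d\tilde t=-\lambda x+cx^p$, where the last step uses the uniform limit $\tilde u\to x$; positivity is then clear from $\lambda<0$ and $c>0$.

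Parts (ii) and (iii) follow the identical template applied to the variational equations. Differentiating the boundary value problem once in $x$ produces the linear problem solved by $\xi:=\partial_x u$ (with $\xi'(0)=0$, $\xi(\alpha)=1$), and after rescaling one obtains $\partial_x y=\tilde\xi'(1)/\alpha$; integrating as before gives $\partial_x y=\alpha\int_0^1(-\lambda+cp\tilde u^{p-1})\tilde\xi\,d\tilde t\to 0$, and dividing by $\alpha$ while letting $\alpha\downarrow 0$ with $\tilde\xi\to 1$ yields $\partial_\alpha(\partial_x y)|_{\alpha=0}=-\lambda+cpx^{p-1}$. Differentiating once more in $x$ gives the second variational problem for $\eta:=\partial_x^2 u$, whose inhomogeneous term $-cp(p-1)\tilde u^{p-2}\tilde\xi^2$ is exactly what survives in the limit and produces $\partial_\alpha(\partial_x^2 y)|_{\alpha=0}=cp(p-1)x^{p-2}$, once one checks that $\tilde\eta\to 0$ uniformly. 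The concluding assertion that $y''(x)>0$ for $\alpha\sim 0$ is then a first-order expansion in $\alpha$: since $\partial_x^2y|_{\alpha=0}=0$ while its $\alpha$-derivative is strictly positive, $\partial_x^2 y(x,\alpha)>0$ for all small $\alpha>0$.

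The step I expect to require the most care is not the identification of the limiting values, which is mechanical once the rescaling is in place, but the differentiability \emph{at} the endpoint $\alpha=0$. For $\alpha>0$, smoothness in $\alpha$ is immediate from the parameter-dependence theorem, yet at $\alpha=0$ one is differentiating quantities that carry an explicit factor $1/\alpha$, so the crude bounds $\tilde u'(1)=O(\alpha^2)$, $\tilde\xi'(1)=O(\alpha^2)$ and $\tilde\eta'(1)=O(\alpha^2)$ must be upgraded to genuine one-sided differentiability. This forces me to control the $\alpha$-derivatives $\partial_\alpha\tilde u$, $\partial_\alpha\tilde\xi$ and $\partial_\alpha\tilde\eta$, which solve the boundary value problems obtained by differentiating the rescaled equations in $\alpha$; the essential point is to show that each of them tends to $0$ uniformly on $[0,1]$ as $\alpha\downarrow 0$. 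This uniform decay is precisely what licenses differentiating the integral representations under the integral sign and identifies the one-sided derivatives at $\alpha=0$ with the stated formulas.
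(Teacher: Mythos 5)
Your proposal is correct and follows essentially the same route as the paper's own proof: the rescaling $t=\alpha\tilde t$ to the fixed interval $(0,1)$, the identity $y(x,\alpha)=\tilde u'(1)/\alpha$, the integral representations obtained by integrating the rescaled equation and its variational equations in $x$, and the identification of the one-sided derivatives at $\alpha=0$ via the uniform convergence of $\partial_\alpha\tilde u$, $\partial_\alpha\tilde\xi$, $\partial_\alpha\tilde\eta$ to zero. Even the point you flag as delicate --- upgrading the $O(\alpha^2)$ bounds to genuine one-sided differentiability at the endpoint --- is exactly the step the paper handles by differentiating the integral representation and invoking the boundary value problems satisfied by the $\alpha$-derivatives of the rescaled solutions.
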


\setcounter{equation}{0}
\section{Geometry of the superlinear problem and general existence result}
\label{section3}
In this section we study equation \eqref{eq11} in $(\a,1-\a)$, where it reduces to
\begin{equation}
\label{eq31}
-u''(t)=\l u(t)+bu^p(t).
\end{equation}
First of all, we observe that it admits the first integral
\begin{equation}
\label{eq32}
E(u,v):=v^2+\l u^2+\frac{2b}{p+1}u^{p+1},
\end{equation}
which is constant along the trajectories of the solutions of \eqref{eq31}. Moreover, since $\l<0$ and $b>0$, apart from $(0,0)$, there is another nonnegative equilibrium, denoted by $(\O,0)$, where
\begin{equation*}
\O=\left(\frac{-\l}{b}\right)^\frac{1}{p-1}.
\end{equation*}
The matrix $D^2 E(0,0)$ has eigenvalues of opposite signs, therefore there is a homoclinic orbit in the phase plane passing through $(0,0)$, which will be denoted by $\g_h$. It follows from \eqref{eq32} that it can be parameterized as $(u,\pm v_h(u))$, for $u\in(0,u_h]$, where
\begin{equation}
\label{eq33}
v_h(u)=\sqrt{-\l u^2- \frac{2b}{p+1}u^{p+1}}
\end{equation}
and
\begin{equation}
\label{eq34}
u_h:=\O\left(\frac{p+1}{2}\right)^{\frac{1}{p-1}}
\end{equation}
(observe that $\g_h$ intersects the $u$-axis in $(u_h,0)$).

On the other hand, $D^2 E(\O,0)$ is positive definite, meaning that the equilibrium $(\O,0)$ is a center which is surrounded by closed orbits, up to the homoclinic $\g_h$.

We are now interested in the superposition of this geometry with the sets $\G_0$ and $\G_1$ introduced in Section \ref{section2}, and our goal is to connect $\G_0$ to $\G_1$ through the flow induced by \eqref{eq31} for $t\in(\a,1-\a)$. Indeed, if we find $(x,y(x))\in\G_0$ such that the unique solution $u_c(t)$ of
\begin{equation*}
  \left\{ \begin{array}{l}
  -u''=\l u+b u^p \quad \hbox{for}\;\; t\in(\a,1-\a),\cr
  u(\a)=x, \quad u'(\a)=y(x)>0\end{array}\right.
\end{equation*}
satisfies $(u_c(1-\a),u_c'(1-\a))\in\G_1$, then, if we denote by $u_l(t)$ and $u_r(t)$ the unique (thanks to the results of Proposition \ref{pr21} and Corollary \ref{co23}) solutions of
\begin{equation*}
  \left\{ \begin{array}{l}
  -u''=\l u-c u^p \quad \hbox{in}\;\; (0,\a),\cr
  u(\a)=x, \cr u'(\a)=y(x) \end{array}\right. 
  \quad \text{ and } \quad
  \left\{ \begin{array}{l}
    -u''=\l u-c u^p \quad \hbox{in}\;\; (1-\a,1),\cr
  u(1-\a)=u_c(1-\a), \cr
  u'(1-\a)=u_c'(1-\a),\end{array}\right.
\end{equation*}
respectively, then
\begin{equation*}
u(t):=\left\{ \begin{array}{ll}
u_l(t) & \text{ for } t\in[0,\a), \\
u_c(t) & \text{ for } t\in[a,1-\a], \\
u_r(t) & \text{ for } t\in(1-\a,1] 
\end{array}\right.
\end{equation*}
is a strong solution of \eqref{eq11}.

Going back to the geometry in the phase plane, Theorem \ref{th22}\eqref{th22iii} implies that $(\O,0)$ lies at a positive distance from $\G_0$, entailing that closed orbits near $(\O,0)$ do not intersect $\G_0$. On the other hand, from Theorem \ref{th22}\eqref{th22iv} and \eqref{eq33}, we have
\begin{equation*}
y'(0)=\sqrt{-\l}\tanh\left(\sqrt{-\l}\a\right)<\sqrt{-\l}=v_h'(0),
\end{equation*}
thus  $\G_0$ lies inside $\g_h$ for $x\sim 0$ and all $\a\in(0,1/2)$, and closed orbits of \eqref{eq31} near $\g_h$ do instead intersect $\G_0$, since they have a vertical tangent on the $u$-axis. By continuity there exists a critical orbit, denoted by $\g_t$, which intersects $\G_0$ being tangent, while orbits between $\g_t$ and $\g_h$ will be secant to $\G_0$ and the ones between $\g_t$ and $(\O,0)$ will not touch $\G_0$. Observe that the same patterns hold true for $\G_1$, as Corollary \ref{co24} says that this curve is obtained by reflecting $\G_0$ with respect to the $u$-axis and the orbits of \eqref{eq31} are symmetric with respect to such an axis, as a consequence of \eqref{eq32}.

In the following, we assume that $\g_t$ is the unique orbit which is tangent to $\G_0$ at some point, that such tangency point, denoted by $(x_t,y(x_t))$ is simple, that all the orbits between $\g_t$ and $\g_h$ intersect $\G_0$ in exactly two points and that all the exterior orbits to $\g_h$ intersect $\G_0$ in exactly one point. In particular, we assume that $\g_h$ intersects $\G_0$ in two points: $(0,0)$ and another one, whose abscissa will be denoted by $x_h>0$.

This situation occurs at least for $\a\sim 0$ as a consequence of the convexity of $\G_0$ given by Proposition \ref{pr25}(iii) and the concavity of the upper part of the closed orbits of \eqref{eq31} surrounding $(\O,0)$. 
Should this not be the case for all $\a\in(0,1/2)$, the existence and multiplicity results presented hereafter would not change, the only difference being that more solutions or components in the bifurcation diagrams might be present.

By taking into account all the features described above, a possible configuration in the phase plane is represented in Figure \ref{fig31}.

\begin{figure}[ht]
\begin{center}
\includegraphics[scale=0.65]{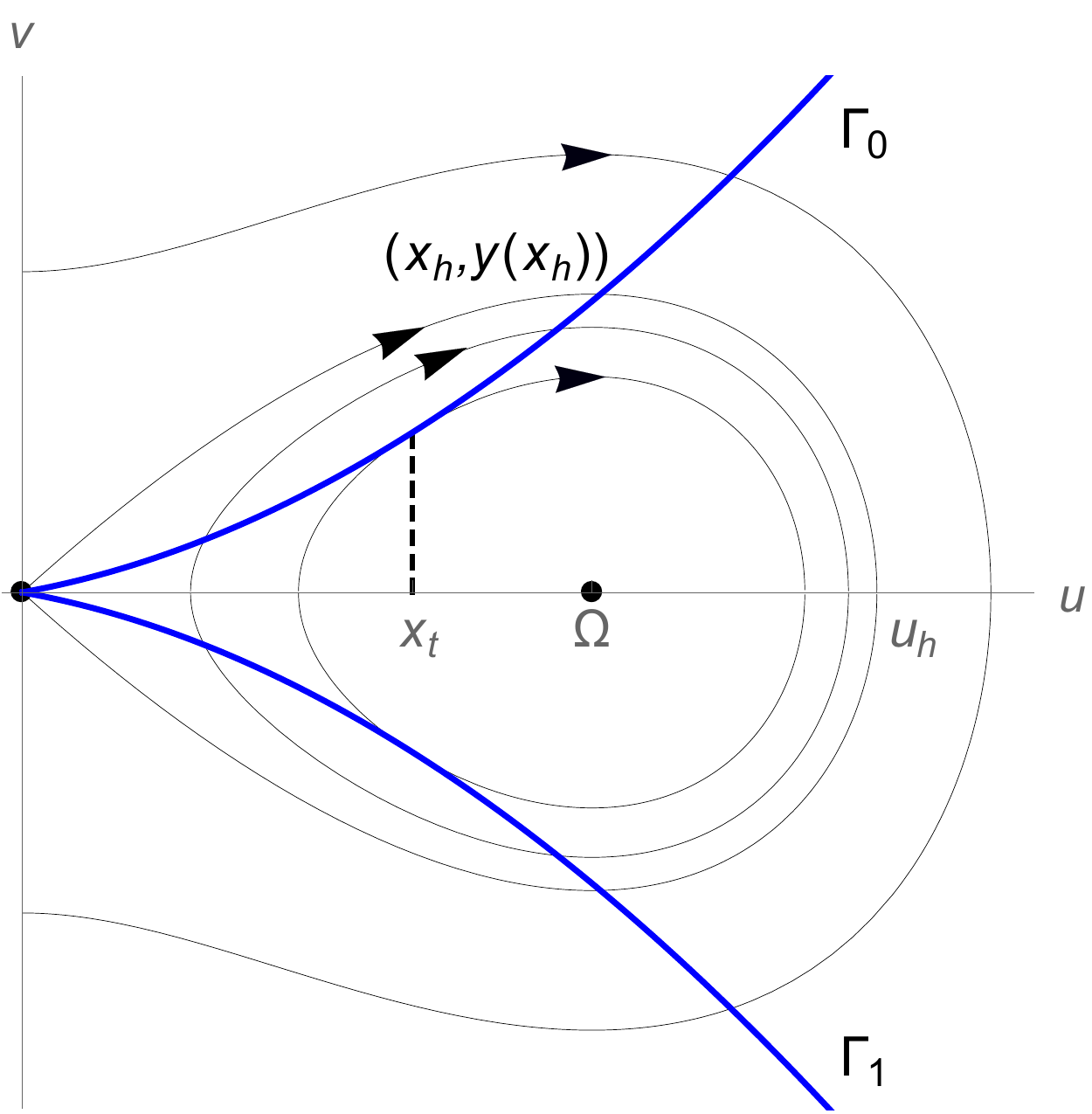} \\
\caption{Geometry of the phase plane in the superlinear part, together with the curves $\G_0$ and $\G_1$.} \label{fig31}
\end{center}
\end{figure}

We conclude this section with  a first general existence result for Problem \eqref{eq11}, which is valid for all the values of the parameters that we consider. Even though it is a particular case of \cite[Theorem 6]{BCN1} and \cite[Theorem 7.1]{ALG},  we present it since we prove it with the specific topological shooting techniques of this work, which will later allow us to obtain  our high multiplicity results.

\begin{theorem}
\label{th31} 
Problem \eqref{eq11} admits (at least) one solution for all the values of the parameters in the considered ranges, in particular for every $\l<0$ and $\a\in(0,1/2)$.
\end{theorem}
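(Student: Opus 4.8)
The plan is to produce a solution that is symmetric with respect to $t=1/2$, which is natural since the weight \eqref{eq12} satisfies $a(t)=a(1-t)$. For such a solution one has $u'(1/2)=0$, so in the phase plane the orbit of \eqref{eq31} hits the positive $u$-axis at $t=1/2$. Concretely, I would start from a point $(x,y(x))\in\G_0$ with $x>0$ (so $y(x)>0$ by Theorem \ref{th22}\eqref{th22ii}), let the flow of \eqref{eq31} run forward for the half-time $T:=\frac12-\a>0$, and require that at $t=1/2$ the trajectory reach a point $(d,0)$ with $d>0$. If this happens, the reflection symmetry of \eqref{eq31} about the $u$-axis (the invariance $(u,v)\mapsto(u,-v)$ of \eqref{eq32} together with time reversal) forces $u(1/2+s)=u(1/2-s)$, so that at $t=1-\a$ the trajectory sits at $(x,-y(x))$, which lies on $\G_1$ by Corollary \ref{co24}. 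Gluing to this central piece the sublinear solutions on $[0,\a]$ and $[1-\a,1]$ provided by Proposition \ref{pr21} and Corollary \ref{co23} then yields, exactly as in the construction preceding the statement, a $\mc C^1$ strong solution satisfying the Neumann conditions.

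Thus everything reduces to a one-dimensional shooting. For $x>0$ let $\tau(x)$ be the first time at which the trajectory of \eqref{eq31} issuing from $(x,y(x))$ meets the $u$-axis. Since $y(x)>0$, the trajectory starts in the upper half-plane with $u'=v>0$, so $u$ increases monotonically until it reaches the right turning point $(d(x),0)$; hence, using \eqref{eq32},
\begin{equation*}
\tau(x)=\int_x^{d(x)}\frac{du}{\sqrt{E(x)-\l u^2-\frac{2b}{p+1}u^{p+1}}},\qquad E(x):=y(x)^2+\l x^2+\frac{2b}{p+1}x^{p+1},
\end{equation*}
where $d(x)>x$ is determined by $\l d^2+\frac{2b}{p+1}d^{p+1}=E(x)$. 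Because such a turning point is nondegenerate (the left-hand side has nonzero $u$-derivative $2u(\l+bu^{p-1})$ at $d(x)>\O$) and $y$ is smooth, $d$ and $\tau$ depend continuously on $x$ on all of $(0,+\infty)$, across the homoclinic value $x_h$ as well, since the integral never reaches down to the saddle $u=0$. It then suffices to show that $\tau$ attains the value $T$, for which I would prove
\begin{equation*}
\lim_{x\da 0}\tau(x)=+\infty,\qquad \lim_{x\to+\infty}\tau(x)=0,
\end{equation*}
and conclude by the intermediate value theorem, since $T=\frac12-\a\in(0,+\infty)$.

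The two limits are the heart of the matter. As $x\da 0$ the initial point $(x,y(x))$ tends to the saddle $(0,0)$ of \eqref{eq31} while, since $E(x)\to0$, the orbit approaches the homoclinic $\g_h$ and $d(x)\to u_h>0$; the passage of the trajectory through a neighbourhood of the hyperbolic equilibrium makes the transit time diverge, which can be read off from the logarithmic divergence, as $x\da0$, of the integral above near its lower endpoint $u=x\to0$. As $x\to+\infty$ one has $y(x)/x\to+\infty$ by Theorem \ref{th22}\eqref{th22v}, hence $E(x)\to+\infty$ and $d(x)\to+\infty$; the superlinear growth $p>1$ of the term $\frac{2b}{p+1}u^{p+1}$ makes the period of the large-amplitude oscillations of \eqref{eq31} shrink to zero, which I would make quantitative through the rescaling $u=d(x)\xi$, showing $\tau(x)=O\big(d(x)^{-(p-1)/2}\big)\to0$.

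Fixing $x^*>0$ with $\tau(x^*)=T$ finishes the proof: the trajectory from $(x^*,y(x^*))$ reaches $(d^*,0)$ at $t=1/2$, its symmetric extension reaches $(x^*,-y(x^*))\in\G_1$ at $t=1-\a$, and the attached sublinear pieces close up a positive strong solution with $u'(0)=u'(1)=0$. Positivity on $[\a,1-\a]$ is immediate because the traversed arc keeps $u\in[x^*,d^*]$ with $x^*>0$, while positivity on the two lateral intervals is given by Proposition \ref{pr21}\eqref{pr21i} and Corollary \ref{co23}\eqref{co23i}. I expect the main obstacle to be the rigorous justification of the two limits of $\tau$, namely the saddle-induced blow-up of the transit time as $x\da0$ and the superlinear decay of the period as $x\to+\infty$; the symmetry argument, the gluing and the continuity of $\tau$ are then routine.
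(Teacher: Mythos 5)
Your proposal is correct and is essentially the paper's own proof: the map $\tau_s$ defined in \eqref{eq35} is exactly $2\tau(x)$ (the factor $2$ coming from the same reflection symmetry about the $u$-axis you invoke), so solving $\tau_s(x)=1-2\a$ is identical to your half-time equation $\tau(x)=\tfrac12-\a$, and the paper concludes with the same two limits ($+\infty$ as $x\da 0$ via the orbit approaching $\g_h$, and $0$ as $x\to+\infty$ via the rescaled integral) and the intermediate value theorem.
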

\begin{proof}
For every $x>0$ we define 
\begin{align}
\label{eq35}
\tau_s(x)&:=2\int_x^{M(x)}\frac{du}{\sqrt{\l(M(x)^2-u^2)+\frac{2b}{p+1}(M(x)^{p+1}-u^{p+1})}}, \notag \\
&=2\int_\frac{x}{M(x)}^1\frac{d\xi}{\sqrt{\l(1-\xi^2)+\frac{2b}{p+1}M(x)^{p-1}(1-\xi^{p+1})}},
\end{align}
where $M(x)>\max\{x,\O\}$ denotes the maximum abscissa of the orbit satisfying 
\begin{equation}
	\label{eqenergy}
E(u,v)=y(x)^2+\l x^2+\frac{2b}{p+1}x^{p+1}.
\end{equation}
It has to be remarked that, thanks to the symmetry of Problem \eqref{eq11}, $\tau_s(x)$ measures the time needed to connect the point $(x,y(x))\in\G_0$ to $(x,-y(x))\in\G_1$ for the first time. As a consequence, the above discussion shows that every point in $\tau_s^{-1}(1-2\a)$ correspond to a solution of \eqref{eq11}.

By looking at Figure \ref{fig31}, it is apparent that $M(x)\to u_h$ as $x\da 0$, thus, by taking \eqref{eq34} into account, we have
\begin{equation*}
\lim_{x\da 0}\tau_s(x)=\frac{2}{\sqrt{-\l}} \int_0^1\frac{d\xi}{\sqrt{\xi^2-\xi^{p+1}}}=+\infty,
\end{equation*}
while
\begin{equation*}
\lim_{x\ua+\infty}\tau_s(x)=0
\end{equation*}
since $M(x)\to+\infty$ in this case, and the integrand in \eqref{eq35} tends uniformly to $0$. As $\tau_s(x)$ is continuous, we obtain the existence of (at least) one value of $\ov x$ such that $\tau_s(\ov x)=1-2\a$, which gives the desired solution of \eqref{eq11}.
\end{proof}

The above discussion on to the way of constructing solutions to Problem \eqref{eq11} and the geometry of the phase plane motivate the introduction of the following functions, which measure the time needed to connect $\G_0$ to $\G_1$ through the flow of \eqref{eq31} in all the possible ways.

We set $D_1=D_1(\a):=(0,x_t(\a))$, $D_2=D_2(\a):=(x_t(\a),x_h(\a))$ and $D_3=D_3(\a):=[x_h(\a),+\infty)$, and, for every $j\in\N^*$ and $x\in D_1\cup D_2$, we define the maps $\tau_j(x)$ as the time needed to reach $\G_1$ exactly for the $j$th time, starting from $(x,y(x))\in\G_0$ and moving along the orbit given by \eqref{eqenergy}. We also define $\tau_1(x)$ analogously for $x\in D_3$, while, for $x\in D_1\cup D_2\cup\{x_t\}$ we denote by $\tau(x)$ the period of the orbit through $(x,y(x))$. Such functions are continuous by the continuous dependence theorem and, moreover, satisfy the following properties. 

\begin{proposition}
	\label{pr32}
	For $x$ lying in the proper domain of definition and every $j\in\N^*$, the following properties hold true:
	\renewcommand{\theenumi}{\roman{enumi}}
	\begin{enumerate}[(i)]
	
	\item $\tau_{2j-1}(x)=\tau_1(x)+(j-1)\tau(x)$ and $\tau_{2j}(x)=\tau_2(x)+(j-1)\tau(x)$;
	
	\item $\tau_{j+1}(x)>\tau_j(x)$;
	
	\item $\lim\limits_{x\da 0}\tau_j(x)=+\infty=\lim\limits_{x\ua x_h}\tau_{j+1}(x)$;

	\item $\lim\limits_{x\to x_t}\tau_{2j-1}(x)=\lim\limits_{x\to x_t}\tau_{2j}(x)$;
	
	\item $\lim\limits_{x\to+\infty}\tau_1(x)=0$.
\end{enumerate}
\begin{proof}
	Properties (i)--(iv) follow from construction and from continuous dependence (see Figure \ref{fig31} if necessary), while (v) follows as in the proof of Theorem \ref{th31} (observe that, for $x\in D_3$, $\tau_1(x)$ coincides with $\tau_s(x)$ introduced in \eqref{eq35}).
\end{proof}
\end{proposition}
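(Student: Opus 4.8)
The plan is to read off all five properties from the clockwise dynamics of \eqref{eq31} on the closed orbits surrounding $(\O,0)$, invoking at the delicate endpoints $x\da 0$ and $x\ua x_h$ the divergence of the relevant time integrals near the saddle at the origin. First I would fix the picture: along any orbit the flow is clockwise (in the upper half-plane $u'=v>0$, in the lower one $v<0$), and by \eqref{eq32} each orbit is symmetric with respect to the $u$-axis, so the point $(x,-y(x))\in\G_1$ always lies on the same orbit as $(x,y(x))\in\G_0$. For $x\in D_1\cup D_2$ this orbit is closed with period $\tau(x)$ and, by the standing two-intersection assumption together with the fact that $\G_1$ is the reflection of $\G_0$, it meets $\G_1$ at exactly two points per period, reached at times $0<\tau_1(x)<\tau_2(x)<\tau(x)$; for $x\in D_3$ the orbit is unbounded and meets $\G_1$ only once. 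I would also record that the time from $(x,y(x))$ over $(M(x),0)$ down to $(x,-y(x))$ is exactly $\tau_s(x)$ of \eqref{eq35}, so that $\tau_s=\tau_2$ on $D_1$ and $\tau_s=\tau_1$ on $D_2\cup D_3$, according to whether the reflected starting point is the deeper or the shallower of the two crossings.

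With this picture, (i) follows by periodicity: since the flow returns to its initial state after time $\tau(x)$, the $(2j-1)$-th and $2j$-th passages through $\G_1$ are the first two shifted by $(j-1)$ full periods, which I would formalize by induction on $j$. Property (ii) is then immediate, the crossing times forming a strictly increasing sequence: from (i) one has $\tau_{2j}-\tau_{2j-1}=\tau_2-\tau_1>0$ and $\tau_{2j+1}-\tau_{2j}=\tau-(\tau_2-\tau_1)>0$, because $0<\tau_1<\tau_2<\tau$. Property (v) is the quickest: on $D_3$ one has $\tau_1=\tau_s$, and the computation in the proof of Theorem \ref{th31} gives $\tau_1(x)=\tau_s(x)\to 0$ as $x\to+\infty$, since $M(x)\to+\infty$ and the integrand in \eqref{eq35} tends uniformly to $0$.

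The two genuinely analytical points are (iii) and (iv). For (iv), as $x\to x_t$ the two intersections of the orbit with $\G_0$, and hence by reflection the two crossings of $\G_1$, coalesce at the tangency point $(x_t,-y(x_t))$ of $\g_t$; by continuity of the flow both $\tau_1(x)$ and $\tau_2(x)$ converge, from either side, to the common time at which $\g_t$ grazes $\G_1$, and combining this with the continuity of $\tau(x)$ at $x_t$ and formula (i) yields $\lim_{x\to x_t}\tau_{2j-1}=\lim_{x\to x_t}\tau_{2j}$. For (iii) I would argue through the time maps rather than mere continuous dependence, since the blow-up is a non-compact phenomenon. As $x\da 0$ the starting point $(x,y(x))$ tends to the hyperbolic equilibrium $(0,0)$, so the time to leave a fixed neighbourhood of the saddle, and in particular to first reach $\G_1\setminus\{(0,0)\}$, diverges; concretely $\tau_1(x)\ge\int_x^{M(x)}du/v(u)$ behaves like $(-\l)^{-1/2}\log(1/x)$ because $v(u)\to v_h(u)\sim\sqrt{-\l}\,u$ near $u=0$, exactly the divergence already isolated in Theorem \ref{th31}, and since also $\tau(x)\to+\infty$ every $\tau_j(x)\to+\infty$.

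As $x\ua x_h$ the orbit instead tends to $\g_h$ and the period again diverges; here the shallower crossing time $\tau_1(x)=\tau_s(x)$ stays bounded, since the arc from $(x,y(x))$ over $(M(x),0)$ to $(x,-y(x))$ converges to the outer arc of $\g_h$, whose lower limit $x_h$ is bounded away from the saddle so that the arc is traversed in finite time, whereas the deeper crossing point approaches $(0,0)$, forcing $\tau_2(x)\to+\infty$; by (i) this makes $\tau_{j+1}(x)\to+\infty$ for every $j$, while $\tau_1$ need not, which is precisely why the statement is phrased with $\tau_{j+1}$. The main obstacle is exactly this last distinction: one must quantify the time integrals near the saddle carefully enough to see that the outer homoclinic arc is covered in finite time while any crossing that limits onto the origin costs infinite time, a point that "continuous dependence" alone does not deliver.
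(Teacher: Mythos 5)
Your proposal is correct and follows essentially the same route as the paper, whose proof is just a terse version of your argument: (i)--(ii) from periodicity of the closed orbits and the ordering of crossing times, (iv) from continuity of the flow at the tangency, and (v) from the identification $\tau_1=\tau_s$ on $D_3$ together with the computation in Theorem \ref{th31}. Your extra detail --- the identification $\tau_s=\tau_2$ on $D_1$ and $\tau_s=\tau_1$ on $D_2\cup D_3$, the logarithmic time estimates near the saddle for (iii), and the explanation of why the statement uses $\tau_{j+1}$ at $x_h$ (since $\tau_1$ stays bounded there) --- is a faithful elaboration of what the paper compresses into ``construction and continuous dependence.''
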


In view of Proposition \ref{pr32}(iv), if we extend the time maps by continuity by setting $\tau_j(x_t):=\lim_{x\to x_t}\tau_j(x)$, we have that
\begin{equation}
	\label{eq37}
	\tau_{2j-1}(x_t)=\tau_{2j}(x_t) \quad \text{ for every $j\in\N^*$.}
	\end{equation}
 All the properties of the time maps that we have established in Proposition \ref{pr32} have been represented in Figure \ref{fig32}.

\begin{figure}[ht]
	\begin{center}
		\includegraphics[scale=0.62]{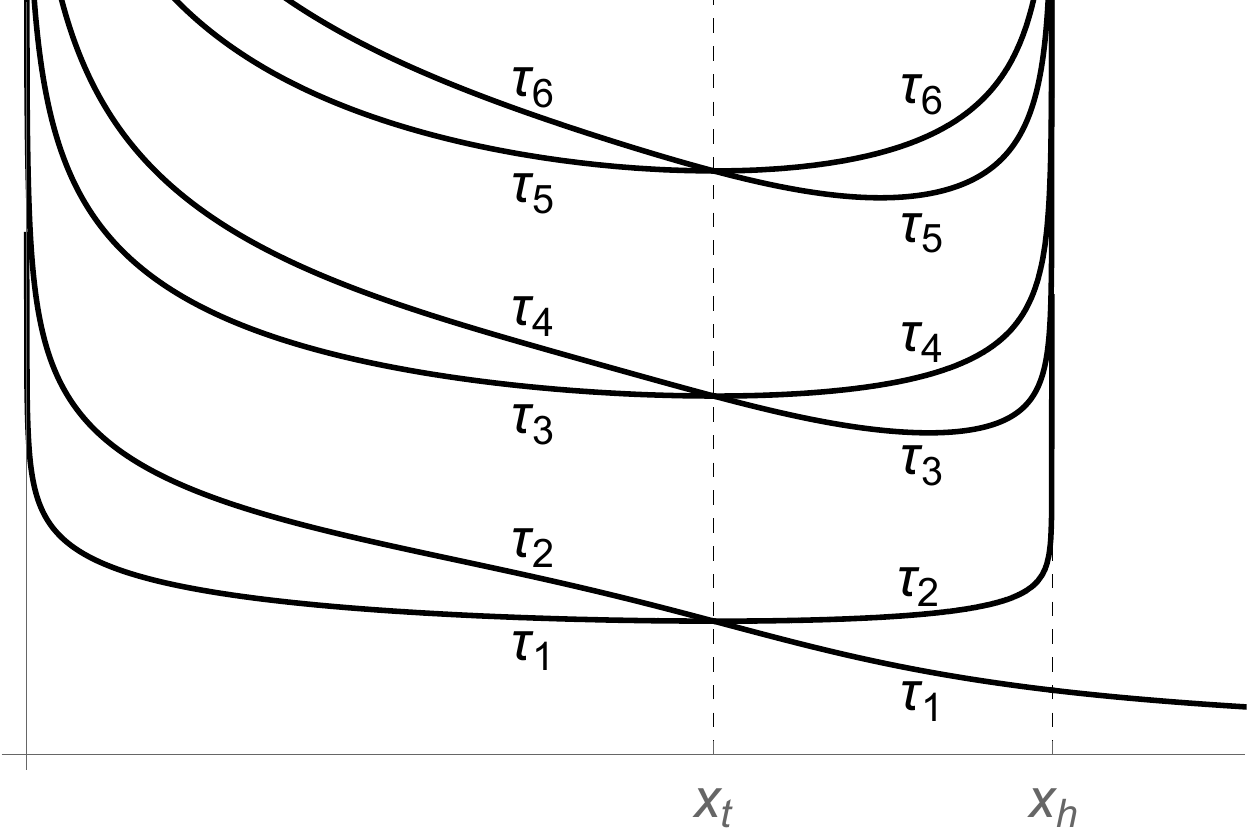} \\
		\caption{Graphs of the time maps $\tau_j(x)$.} \label{fig32}
	\end{center}
\end{figure}

\setcounter{equation}{0}
\section{Singular perturbation from $\a=0$ and high multiplicity results}
\label{section4}
In this section we obtain our high multiplicity results, first of all for the purely superlinear problem corresponding to the case $\a=0$ and later, by performing a singular perturbation, we obtain an analogous result for $\a\sim 0$.

\begin{theorem}
	\label{th41}
For every $n\in\N$, denote $\l_n:=-\frac{(n\pi)^2}{p-1}$. Then, if $\l\in[\l_{n+1},\l_n)$, the purely superlinear problem \eqref{eq11} with $\a=0$ has exactly $2n+1$ solutions.

\begin{proof} Consider the Cauchy problem
	\begin{equation}
	\label{eq41}
	\left\{ \begin{array}{l}
	-u''=\l u+bu^p \quad \hbox{for}\;\; t\in(0,1),\cr
	u'(0)=0, \qquad u(0)=x.\end{array}\right.
	\end{equation}
	A solution of \eqref{eq41} is a solution of Problem \eqref{eq11} with $\a=0$ if and only if, after a certain number of half-laps around the equilibrium $(\O,0)$ in the phase plane, it arrives back on the $u$-axis (see Figure \ref{fig31}, now without considering the curves $\G_0$ and $\G_1$, since $\a=0$). For this reason, for $x\in(0,u_h)\setminus\{\O\}$, we define
\begin{equation*}
T_1(x)=\sign(x_1(x)-x)\int_x^{x_1(x)}\frac{du}{\sqrt{-\l(u^2-x^2)-\frac{2b}{p+1}(u^{p+1}-x^{p+1})}}, 
\end{equation*}
where, by denoting by $M(x)$ and $m(x)$ the greatest and the smallest abscissa (respectively) of the orbit in the phase plane which passes through $(x,0)$, we have set
\begin{equation*}
x_1(x)=\begin{cases}
M(x) & \text{if $x<\O$,} \\m(x) & \text{if $x>\O$.}
\end{cases}
\end{equation*}
$T_1(x)$ is a differentiable function that measures the time needed for the solution of \eqref{eq41} to reach the $u$-axis again for the first time. Similarly, $T_n(x):=nT_1(x)$, $n\in\N^*$, measures the time needed for the solution of Problem \eqref{eq41} to reach the $u$-axis for the $n$th time. The graphs of these time maps have been represented in Figure \ref{fig4.1}.

	\begin{figure}[ht]
	\begin{center}
		\begin{tabular}{c}
			\includegraphics[scale=0.60]{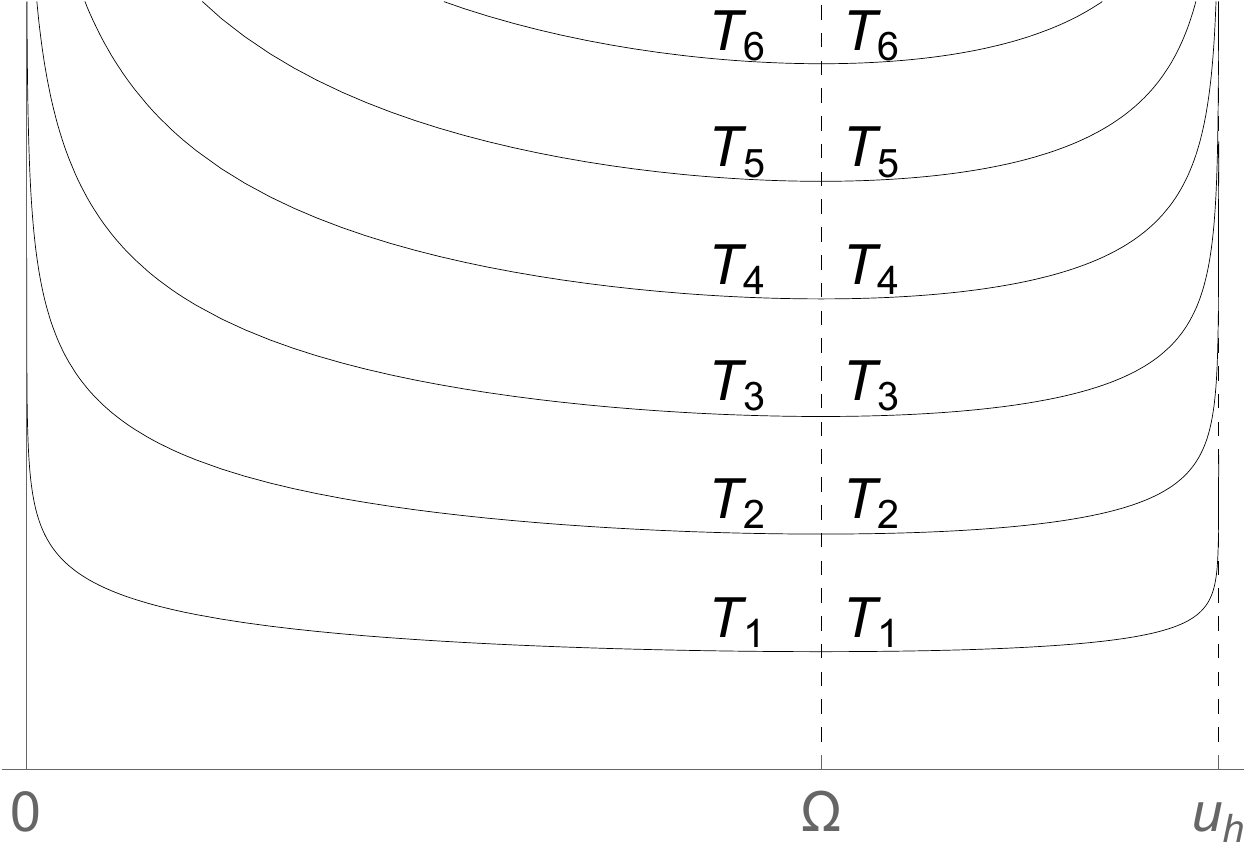} 
		\end{tabular} \\
		\caption{Time maps of Problem \eqref{eq41}.
		} \label{fig4.1}
	\end{center}
\end{figure}

By continuous dependence we have
\begin{equation*}
\label{eq42}
\lim_{x\da0}T_n(x)=+\infty=\lim_{x\ua u_h}T_n(x),
\end{equation*}
since we approach the homoclinic $\g_h$ while, by linearizing around $\O$, $\lim_{x\to\O}T_n(x)=\frac{n\pi}{\sqrt{\l(1-p)}}$, and we can extend $T_n$ by continuity by setting
\begin{equation}
\label{eq43}
T_n(\O)=\frac{n\pi}{\sqrt{\l(1-p)}}.
\end{equation}
By using \cite[Theorem 1 and remarks on page 102]{S} and that, from the symmetry of the problem, $T_n(x)=T_n(x_1(x))$, we obtain that
\begin{equation}
\label{monotoniaT}
T_n(x) \text{ is decreasing for $0<x<\O$ and increasing for $\O<x<u_h$}.
\end{equation}
Assume now that, $\l\in[\l_{n+1},\l_n)$ for some $n\in\N^*$. By \eqref{eq43}, this is equivalent to
$T_n(\O)<1\leq T_{n+1}(\O)$,
thus,  the previous analysis guarantees that the equation $T_j(x)=1$ has exactly $2$ solutions,  $x_j^-\in(0,\O)$ and $x_j^+\in(\O,u_h)$, for every $j=1,\ldots,n$, while it has no solution for $j>n$. Moreover, since  for every $n\in\N^*$ $T_{n+1}(x)>T_n(x)$, $x_{j_1}^-< x_{j_2}^-$ and $x_{j_1}^+> x_{j_2}^+$ if $1\leq j_1< j_2\leq n$, implying that the corresponding solutions of Problem \eqref{eq11} with $\a=0$ are different.

Observing in addition that Problem \eqref{eq11} with $\a=0$ always admits the constant solution $u\equiv \O$, we have proved that, when $\l\in[\l_{n+1},\l_n)$ for some $n\in\N^*$, the problem has exactly $2n+1$ solutions. Finally, when $\l\in[\l_1,\l_0)$, $T_1(\O)\geq 1$, thus the unique solution is $u\equiv\O$, which concludes the proof.
\end{proof}
\end{theorem}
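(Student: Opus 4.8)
The plan is to recast the Neumann problem with $\a=0$ as a shooting problem in the phase plane of \eqref{eq31} and to count the orbits that close up at time $t=1$. Since $\a=0$ the weight equals $b>0$ throughout, and the conditions $u'(0)=u'(1)=0$ force every solution to begin and end on the $u$-axis $\{v=0\}$. By the phase-plane analysis of Section~\ref{section3}, each point $(x,0)$ with $x\in(0,u_h)\setminus\{\O\}$ lies on a closed orbit of \eqref{eq31} encircling the center $(\O,0)$, so the solution issued from $(x,0)$ is periodic and meets the $u$-axis again for the first time after a half-period $T_1(x)$; the integral formula for $T_1$ follows from the first integral \eqref{eq32}, and its smoothness from differentiable dependence on the data. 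By autonomy and the symmetry $v\mapsto-v$ of the orbits, the $j$-th return time is $T_j(x)=jT_1(x)$, and a function issued from $(x,0)$ solves \eqref{eq11} with $\a=0$ precisely when $T_j(x)=1$ for some $j\in\N^*$, to which one adds the constant solution $u\equiv\O$.

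I would then compute the three relevant limits of the maps $T_j$. As $x\da0$ and as $x\ua u_h$ the orbit through $(x,0)$ approaches the homoclinic $\g_h$ to the saddle $(0,0)$, whose passage near the saddle takes unbounded time, so $T_j(x)\to+\infty$ at both endpoints. As $x\to\O$ I would linearize \eqref{eq31} about the center: setting $u=\O+w$ gives $w''=\l(p-1)w$ with $\l(p-1)<0$, a harmonic oscillation of half-period $\pi/\sqrt{\l(1-p)}$; hence $T_j$ extends continuously to $\O$ with $T_j(\O)=j\pi/\sqrt{\l(1-p)}$. The decisive structural input is the monotonicity of the time map: $T_j$ is strictly decreasing on $(0,\O)$ and strictly increasing on $(\O,u_h)$. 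This I would deduce from the monotonicity criteria for the period function of the oscillations in the potential well $W(u)=\l u^2+\frac{2b}{p+1}u^{p+1}$ provided by \cite{S}, after recording the reflection identity $T_j(x)=T_j(x_1(x))$, where $x_1(x)$ is the second intersection of the orbit with the $u$-axis.

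With these ingredients the count becomes bookkeeping. From the formula for $T_j(\O)$, the hypothesis $\l\in[\l_{n+1},\l_n)$ is equivalent to $T_n(\O)<1\le T_{n+1}(\O)$. For each $j\le n$ one has $T_j(\O)=jT_1(\O)\le T_n(\O)<1$, so on the decreasing branch $(0,\O)$, where $T_j$ runs from $+\infty$ down to $T_j(\O)$, and on the increasing branch $(\O,u_h)$, where it runs from $T_j(\O)$ up to $+\infty$, the equation $T_j(x)=1$ has exactly one root each, say $x_j^-$ and $x_j^+$. For $j\ge n+1$ one has $T_j(x)\ge T_j(\O)\ge T_{n+1}(\O)\ge1$, with equality only at $x=\O$, so no further nonconstant solution arises. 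Since $T_j(x)>T_{j-1}(x)$ pointwise, the roots interlace as $x_1^-<\cdots<x_n^-<\O<x_n^+<\cdots<x_1^+$; the $2n$ abscissae are therefore pairwise distinct, and so are the corresponding solutions, which carry pairwise distinct initial data $u(0)$. Adding $u\equiv\O$ yields exactly $2n+1$ solutions, and the case $n=0$, i.e. $\l\in[\l_1,\l_0)$ with $T_1(\O)\ge1$, leaves only $u\equiv\O$, in agreement with $2\cdot0+1=1$.

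The only genuinely delicate point is the monotonicity of the time map asserted above: the three limits and the final count are routine consequences of continuous dependence and the qualitative shape of $T_j$. Establishing that $T_1$ has no interior critical point on either side of $\O$ — equivalently, that the oscillation period in the well $W$ is strictly monotone in the energy — is exactly the nontrivial fact I would import from \cite{S}, and it is what rules out spurious extra intersections with the level $1$ and thereby pins the count down to precisely $2n+1$.
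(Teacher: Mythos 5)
Your proposal is correct and follows essentially the same route as the paper's own proof: the same return-time maps $T_j(x)=jT_1(x)$ built from the first integral \eqref{eq32}, the same three limits (blow-up of the time near the homoclinic $\g_h$, the linearized value $j\pi/\sqrt{\l(1-p)}$ at $\O$), the monotonicity of the period map imported from \cite{S} together with the reflection identity $T_j(x)=T_j(x_1(x))$, and the identical root-counting and interlacing argument yielding $2n$ nonconstant solutions plus the constant $u\equiv\O$. No gaps; the only point you flag as delicate (strict monotonicity of the time map) is precisely the ingredient the paper also delegates to \cite{S}.
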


In order to obtain the analogous multiplicity result for $\a\sim 0$, we study the behavior of the time maps $\tau_j$ introduced in Section \ref{section3} as $\a\da 0$. To this end, we preliminarily need the following geometrical result related to the behavior of $x_t(\a)$ (here and in the following we explicitly write the dependence on $\a$ of the quantities introduced in the previous sections).

\begin{proposition}
	\label{pr42}
	$x_t(\a)$ is decreasing for $\a\sim 0$ and satisfies
	\begin{equation}
		\label{asymptotics}
		x_t(\a)=\O+o(\a) \text{ as $\a\da 0$}.
		\end{equation}
		\begin{proof}
			As remarked above (see \eqref{eq32}), the equation of the upper half-orbit in the phase plane $(u,v)$ through a point whose coordinates are $(x,y)$, with $y>0$, is given by
			\begin{equation*}
				v(u,x,y)=\sqrt{y^2-\l\left(u^2-x^2\right)-\frac{2b}{p+1}\left(u^{p+1}-x^{p+1}\right)},
				\end{equation*}
				therefore
				\begin{equation}
					\label{tangenttoorbit}
					\frac{\p v}{\p u}(x,x,y)=\frac{-\l x-bx^p}{y},
					\end{equation}
				which is positive if and only if $x\in(0,\O)$.	As $\p y/\p x>0$ (due to Theorem \ref{th22}\eqref{th22iii}), \eqref{tangenttoorbit} implies that $x_t(\a)<\O$, since the tangent to $\G_0(\a)$ and to the orbit through the point $(x_t(\a),y(x_t(\a),\a))$ must be the same, in particular its slope must be positive. 
					
					We fix now $0<\a_2<\a_1$, $\a_1\sim 0$ and set $\ov x:=x_t(\a_1)$.
					From \eqref{dydalpha}, we deduce $y(\ov x,\a_2)<y(\ov x,\a_1)$, while \eqref{dy'dalpha} implies $\frac{\p y}{\p x}(\ov x,\a_2)<\frac{\p y}{\p x}(\ov x,\a_1)$. This, together with \eqref{tangenttoorbit}, gives
					\[\frac{\p y}{\p x}\left(\ov x,\a_2\right)<\frac{\p y}{\p x}\left(\ov x,\a_1\right)=\frac{\p v}{\p u}\left(\ov x ,\ov x, y(\ov x,\a_1)\right)<\frac{\p v}{\p u}\left(\ov x ,\ov x, y(\ov x,\a_2)\right).\] Thus, the orbit through $(\ov x,y(\ov x,\a_2))$ is secant to $\G_0(\a_2)$ and the convexity of $\G_0(\a_2)$ (see  Proposition \ref{pr25}(iii)) implies $x_t(\a_1)<x_t(\a_2)$. As a consequence, the limit of $x_t(\a)$ as $\a\da 0$ exists and is positive. Let us denote it by $l$.
					
					By imposing that the derivative of the orbit of the superlinear problem, given by \eqref{tangenttoorbit}, coincides with $\frac{\p y}{\p x}(x_t(\a),\a)$, we obtain the following implicit relation which characterizes $x_t(\a)$:
					\begin{equation}
						\label{implicitx_t}
						\frac{\p y}{\p x}(x_t(\a),\a)y(x_t(\a),\a)=-\l x_t(\a)-b x_t^{p}(\a).
						\end{equation}
						By taking the limit in \eqref{implicitx_t},  Proposition \ref{pr25}(i)--(ii) implies that $l$ satisfies $-\l l-b l^p=0$, whose unique positive solution is $l=\O$. This proves the zeroth order term in the expansion \eqref{asymptotics}. For the first order term, we differentiate \eqref{implicitx_t} with respect to $\a$, obtaining
						\[\left(\frac{\p^2y}{\p x^2}x_t'(\a)+\frac{\p^2y}{\p\a\p x}\right)y(x_t(\a),\a)+\frac{\p y}{\p x}(x_t(\a),\a)\left(\frac{\p y}{\p x}x_t'(\a)+\frac{\p y}{\p \a}\right)=\left[-\l-bpx_t(\a)^{p-1}\right]x_t'(\a).\]
						As $\a\da 0$, the left-hand side tends to 0 thanks to Proposition \ref{pr25}(i)--(iii), while the first factor in the right-hand side tends to $\l(p-1)<0$, giving $\lim_{\a\da 0}x_t'(\a)=0$, as we wanted.
			\end{proof}
	\end{proposition}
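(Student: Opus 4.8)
The plan is to characterise the tangency abscissa $x_t(\a)$ implicitly and then study it as a function of $\a$. First I would write the upper half-orbit of \eqref{eq31} through a point $(x,y)$ with $y>0$ as $v(u,x,y)=\sqrt{y^2-\l(u^2-x^2)-\frac{2b}{p+1}(u^{p+1}-x^{p+1})}$ and compute its slope at the base point, namely $\frac{\p v}{\p u}(x,x,y)=\frac{-\l x-bx^p}{y}$, which is positive exactly when $x\in(0,\O)$. Since $\G_0$ has everywhere positive slope by Theorem \ref{th22}\eqref{th22iii}, equating the slope of $\G_0$ with that of the tangent orbit forces $x_t(\a)\in(0,\O)$ and yields the implicit relation $\frac{\p y}{\p x}(x_t,\a)\,y(x_t,\a)=-\l x_t-b x_t^p$, which is \eqref{implicitx_t} and will be the object of study throughout.

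To prove monotonicity I would compare slopes at a frozen abscissa. Fixing $0<\a_2<\a_1\sim 0$ and setting $\ov x:=x_t(\a_1)$, I would use the signs in Proposition \ref{pr25}: \eqref{dydalpha} makes $y(\ov x,\cdot)$ increasing in $\a$ and \eqref{dy'dalpha} makes $\frac{\p y}{\p x}(\ov x,\cdot)$ increasing in $\a$, so passing from $\a_1$ down to $\a_2$ lowers both. But lowering $y(\ov x,\cdot)$ raises the orbit slope $\frac{-\l\ov x-b\ov x^p}{y(\ov x,\cdot)}$, the numerator being positive because $\ov x<\O$. Chaining these inequalities shows that at $\a_2$ the slope of $\G_0(\a_2)$ at $\ov x$ is strictly below the slope of the orbit through $(\ov x,y(\ov x,\a_2))$, so that orbit is secant to $\G_0(\a_2)$; invoking the convexity of $\G_0(\a_2)$ from Proposition \ref{pr25}(iii), the genuine tangency is pushed to the right, giving $x_t(\a_1)<x_t(\a_2)$. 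Hence $x_t$ is decreasing for $\a\sim 0$, and being bounded below by $0$ it has a positive limit $l:=\lim_{\a\da 0}x_t(\a)$.

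To identify $l$ and then the first-order term I would pass to the limit in \eqref{implicitx_t}. Because $x_t(\a)\to l$ remains in a compact subset of $\R^+$, Proposition \ref{pr25}(i)--(ii) forces $y(x_t,\a)\to 0$ and $\frac{\p y}{\p x}(x_t,\a)\to 0$, so the left-hand side vanishes and $l$ solves $-\l l-b l^p=0$, whose unique positive root is $\O$; this is the zeroth-order term of \eqref{asymptotics}. For the $o(\a)$ refinement I would differentiate \eqref{implicitx_t} in $\a$ and again let $\a\da 0$: every summand on the left carries a factor among $y$, $\frac{\p y}{\p x}$, $\frac{\p^2 y}{\p x^2}$ that tends to $0$ by Proposition \ref{pr25}(i)--(iii) while the remaining factors stay bounded, so the left side vanishes, whereas the coefficient of $x_t'(\a)$ on the right tends to $-\l-bp\,\O^{p-1}=\l(p-1)\neq 0$. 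This yields $\lim_{\a\da 0}x_t'(\a)=0$, which is precisely $x_t(\a)=\O+o(\a)$.

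The step I expect to be the crux is the monotonicity argument, which is geometric rather than computational: turning ``the orbit is secant at $\ov x$'' into ``the tangency abscissa has moved to the right'' really does require the convexity of $\G_0(\a_2)$ together with the concavity of the upper branch of the orbits, so I would need the convexity of Proposition \ref{pr25}(iii) to hold for every small $\a$ in play, not merely in the limit. A secondary technicality is justifying that the $\a$-derivatives of $y$, $\frac{\p y}{\p x}$ and $\frac{\p^2 y}{\p x^2}$ exist with the limits claimed near $x=x_t$; this too is supplied by Proposition \ref{pr25}, and care is only needed to ensure the relevant convergences are locally uniform around the moving point $x_t(\a)$.
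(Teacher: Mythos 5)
Your proposal follows the paper's own proof essentially step for step: the same slope formula \eqref{tangenttoorbit} and implicit relation \eqref{implicitx_t}, the same frozen-abscissa comparison using \eqref{dydalpha}, \eqref{dy'dalpha} and the convexity from Proposition \ref{pr25}(iii) to get monotonicity of $x_t(\a)$, and the same limit and differentiation of \eqref{implicitx_t} to identify $l=\O$ and conclude $x_t'(\a)\to 0$. No substantive differences; only note that the limit of $x_t(\a)$ as $\a\da 0$ exists because $x_t$ is monotone and bounded above by $\O$ (the values increase as $\a$ decreases), positivity being automatic from monotonicity, rather than from a lower bound by $0$.
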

	
	As a consequence of the previous proposition, in order to determine $\lim_{\a\da 0}\tau_1(x_t(\a))$ we need to perform a singular perturbation result. This is the content of the following proposition.
\begin{proposition}
	\label{pr43}
The following holds true:
\begin{equation*}
\lim_{\a\da 0}\tau_1(x_t(\a),\a)=\frac{\pi}{\sqrt{\l(1-p)}}.
\end{equation*}
\begin{proof}
The Taylor expansion of the nonlinearity in \eqref{eq31} around $\O$ gives \[-(u(t)-\O)''=\l(1-p)(u(t)-\O)+\sum_{j\geq 2}\tilde c_j(u(t)-\O)^j\] for some coefficients $\tilde c_j$, thus the expression of the orbit through $(x_t(\a),y(x_t(\a),\a))$ given by \eqref{eqenergy} reads
\begin{multline*}
	v^2+\l(1-p)(u-\O)^2+\sum_{j\geq 3}c_j(u-\O)^j= \\ =y^2(x_t(\a),\a)+\l(1-p)\left(x_t(\a)-\O\right)^2+\sum_{j\geq 3}c_j(x_t(\a)-\O)^j,
	\end{multline*}
where $c_j=\frac{2}{j}\tilde c_{j-1}$, and the time to connect $\G_0$ to $\G_1$ for the first time along this orbit is
\begin{multline*}
\tau_1(x_t(\a),\a)=2\int_{x_t(\a)-\O}^{M(\a)-\O}\Biggl(y^2(x_t(\a),\a)+\l(1-p)\left(x_t(\a)-\O\right)^2+\Biggr. \\
+\left.\sum_{j\geq 3}c_j(x_t(\a)-\O)^j-\l(1-p)\t^2-\sum_{j\geq 3}c_j\t^j\right)^{-1/2}d\t,
\end{multline*}
where, in order to shorten the notation, we have set $M(\a)=M(x_t(\a),\a)$.
We can split the integral in two summands: $I_1$ for $\t\in(x_t(\a)-\O,0)$ and $I_2$ for  $\t\in(0,M(\a)-\O)$. Regarding $I_1$,  by using the estimate
\begin{equation*}
\biggl\lvert\sum_{j\geq 3}c_j\t^j\biggr\rvert\leq\sum_{j\geq 3}|c_j||x_t(\a)-\O|^j,
\end{equation*}
we have
\begin{equation*}
\int_{x_t(\a)-\O}^{0}\frac{d\t}{\sqrt{h^+_1(\a)-\l(1-p)\t^2}}\leq I_1(\a)\leq\int_{x_t(\a)-\O}^{0}\frac{d\t}{\sqrt{h^-_1(\a)-\l(1-p)\t^2}},
\end{equation*}
where
\begin{equation*} h^{\pm}_1(\a):=y^2(x_t(\a),\a)+\l(1-p)\left(x_t(\a)-\O\right)^2+\sum_{j\geq 3}c_j(x_t(\a)-\O)^j\pm\sum_{j\geq 3}|c_j||x_t(\a)-\O|^j.
	\end{equation*}
	Now,
\begin{equation*}
\int_{x_t(\a)-\O}^{0}\frac{d\t}{\sqrt{h^\pm_1(\a)-\l(1-p)\t^2}}=\frac{1}{\sqrt{\l(1-p)}}\int_{-\sqrt{\l(1-p)\frac{(x_t(\a)-\O)^2}{h^\pm_1(\a)}}}^{0}\frac{d\xi}{\sqrt{1-\xi^2}}.
\end{equation*}

Recalling \eqref{asymptotics}, \eqref{dydalpha} and Proposition \ref{pr25}(ii), we have
\[\lim_{\a\da0}\frac{y(x_t(\a),\a)}{x_t(\a)-\O}=\lim_{\a\da0}\frac{\frac{\p y}{\p x}x_t'(\a)+\frac{\p y}{\p \a}}{x_t'(\a)}=+\infty,
\]
thus
\begin{equation*}
	\frac{h^{\pm}_1(\a)}{(x_t(\a)-\O)^2}=\frac{y(x_t(\a),\a)^2}{(x_t(\a)-\O)^2}+\l(1-p)+o\left(1\right)\to +\infty
	\end{equation*}
	as $\a\to 0$, and $\lim_{\a\da 0}I_1(\a)=0$.

Passing to $I_2$, since $\t\in(0,M(\a)-\O)$, we have 
\begin{equation*}
	\bigl\lvert\sum_{j\geq 3}c_j\t^j\bigr\rvert\leq\sum_{j\geq 3}|c_j|(M(\a)-\O)^j,
\end{equation*}
and we get the following estimate
\begin{equation*}
	\int_{0}^{M(\a)-\O}\frac{d\t}{\sqrt{h^+_2(\a)-\l(1-p)\t^2}}\leq I_2(\a)\leq\int_{0}^{M(\a)-\O}\frac{d\t}{\sqrt{h^-_2(\a)-\l(1-p)\t^2}},
\end{equation*}
where
\begin{equation*}
	h^{\pm}_2(\a):=y^2(x_t(\a),\a)+\l(1-p)\left(x_t(\a)-\O\right)^2+\sum_{j\geq 3}c_j(x_t(\a)-\O)^j\pm\sum_{j\geq 3}|c_j|(M(\a)-\O)^j.
	\end{equation*}
	Now,
\begin{equation*}
	\int_{0}^{M(\a)-\O}\frac{d\t}{\sqrt{h^\pm_2(\a)-\l(1-p)\t^2}}=\frac{1}{\sqrt{\l(1-p)}}\int_{0}^{\sqrt{\l(1-p)\frac{(M(\a)-\O)^2}{h^\pm_2(\a)}}}\frac{d\xi}{\sqrt{1-\xi^2}}.
\end{equation*}
To compute the limit as $\a\da 0$ we  need preliminarily to establish the following asymptotic expansion for $M(\a)$:
\begin{equation}
	\label{asymptoticsM}
	M(\a)=\O+\frac{1}{\sqrt{\l(1-p)}}\frac{\p y}{\p\a}(\O,0)\a+o(\a) \text{ as $\a\sim 0$.}
\end{equation}
Indeed, $M(\a)>\O$ is implicitly defined through
\begin{equation}
	\label{implicitM}
	y^2(x_t(\a),\a)+\l x^2_t(\a)+\frac{2b}{p+1}x^{p+1}_t(\a)=\l M^2(\a)+\frac{2b}{p+1}M^{p+1}(\a),
\end{equation}
which, by using Proposition \ref{pr25}(i) and \eqref{asymptotics}, gives that $L:=\lim_{\a\da 0}M(\a)$ satisfies $\l L^2+\frac{2b}{p+1}L^{p+1}=\l\O^2+\frac{2b}{p+1}\O^{p+1}$, whose unique nonnegative solution is  $L=\O$. Moreover, differentiating \eqref{implicitM} twice with respect to $\a$ (differentiating once and taking $\a\to 0$ gives nothing but $0=0$) leads to
\begin{multline*}
	\left(\frac{\p y}{\p x}x_t'(\a)+\frac{\p y}{\p\a}\right)^2+y(x_t(\a),\a)\frac{d}{d\a}\left(\frac{\p y}{\p x}x_t'(\a)+\frac{\p y}{\p\a}\right)+ \\ + x_t'^2(\a)\left(\l+bpx_t^{p-1}(\a)\right)
	+x_t(\a)x_t''(\a)\left(\l+bx_t^{p-1}(\a)\right)= \\ =M'^2(\a)\left(\l+bpM^{p-1}(\a)\right)+
	M(\a)M''(\a)\left(\l+bM^{p-1}(\a)\right),
\end{multline*}
which, by taking $\a\to 0$, reduces to $\left(\frac{\p y}{\p\a}(\O,0)\right)^2=M'^2(0)\l(1-p)$, completing the proof of \eqref{asymptoticsM}.

Recalling \eqref{asymptotics}, \eqref{asymptoticsM}, \eqref{dydalpha} and Proposition \ref{pr25}(ii), we have
\[\lim_{\a\da0}\frac{y(x_t(\a),\a)}{M(\a)-\O}=\lim_{\a\da0}\frac{\frac{\p y}{\p x}x_t'(\a)+\frac{\p y}{\p \a}}{M'(\a)}=\sqrt{\l(1-p)} \quad \text{and} \quad \lim_{\a\da0}\frac{x_t(\a)-\O}{M(\a)-\O}=0,
\]
thus
\begin{multline*}
	\frac{h^{\pm}_2(\a)}{(M(\a)-\O)^2}=\frac{y(x_t(\a),\a)^2}{(M(\a)-\O)^2}+\l(1-p)\frac{(x_t(\a)-\O)^2}{(M(\a)-\O)^2}+ \\
	+ \sum_{j\geq 3}c_j\frac{(x_t(\a)-\O)^2}{(M(\a)-\O)^2}(x_t(\a)-\O)^{j-2}\pm\sum_{j\geq 3}|c_j|(M(\a)-\O)^{j-2}\to \l(1-p)
\end{multline*}
as $\a\to 0$, and $\lim_{\a\da 0}I_2(\a)=\frac{\arcsin(1)}{\sqrt{\l(1-p)}}=\frac{\pi}{2\sqrt{\l(1-p)}}$, concluding the proof.
\end{proof}
\end{proposition}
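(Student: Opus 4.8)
The plan is to read $\tau_1(x_t(\a),\a)$ as an explicit time integral along the tangent orbit $\g_t$ and to exploit that, as $\a\da0$, this orbit collapses onto the center $(\O,0)$, so that the first crossing time must converge to the half-period of the oscillator obtained by linearizing \eqref{eq31} around $\O$. That half-period is exactly $\frac{\pi}{\sqrt{\l(1-p)}}$, which is precisely the claimed limit and coincides with $T_1(\O)$ from \eqref{eq43}; the content of the statement is therefore that the nonlinear corrections wash out in the limit. The base point $(x_t(\a),y(x_t(\a),\a))$ tends to $(\O,0)$, since $x_t(\a)\to\O$ by Proposition \ref{pr42} and $y(x_t(\a),\a)\to0$ by Proposition \ref{pr25}(i). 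Because the orbits of \eqref{eq31} are symmetric about the $u$-axis and $\g_t$ is assumed to touch $\G_0$ only at the simple tangency $x_t$, the first connection to $\G_1$ is realized at the reflected point $(x_t(\a),-y(x_t(\a),\a))$, so using the first integral \eqref{eqenergy} I would write
\[
\tau_1(x_t(\a),\a)=2\int_{x_t(\a)}^{M(\a)}\frac{du}{\sqrt{E_t-\l u^2-\frac{2b}{p+1}u^{p+1}}},
\]
where $E_t$ is the energy level of $\g_t$ and $M(\a)$ its maximal abscissa.

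Next I would shift the origin to the equilibrium via $\t=u-\O$ and Taylor-expand $f(u)=\l u+bu^p$ around $\O$; since $f(\O)=0$ and $f'(\O)=\l(1-p)>0$, the orbit equation becomes $v^2+\l(1-p)\t^2+\sum_{j\geq3}c_j\t^j=h(\a)$, with $h(\a)$ the value of the right-hand side at the base point. The integral then reads $2\int_{x_t(\a)-\O}^{M(\a)-\O}(h-\l(1-p)\t^2-\sum_{j\geq3}c_j\t^j)^{-1/2}\,d\t$, which I split at $\t=0$ into a contribution $I_1$ from the arc with $u<\O$ and a contribution $I_2$ from the arc with $u>\O$. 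On each subinterval I would sandwich the higher-order tail $\sum_{j\geq3}c_j\t^j$ between its extremal values over the relevant range, reducing both pieces to the elementary integral $\int d\t/\sqrt{H-\l(1-p)\t^2}=\frac{1}{\sqrt{\l(1-p)}}\arcsin\!\big(\sqrt{\l(1-p)}\,\t/\sqrt{H}\big)$, so that everything is governed by the limits of the dimensionless ratios $\l(1-p)(x_t-\O)^2/h$ and $\l(1-p)(M-\O)^2/h$.

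The heart of the argument is the asymptotic bookkeeping of the three small quantities $x_t(\a)-\O$, $M(\a)-\O$ and $y(x_t(\a),\a)$ as $\a\da0$. From Proposition \ref{pr42} I have $x_t(\a)-\O=o(\a)$ and $x_t'(\a)\to0$, while Proposition \ref{pr25}(i) together with \eqref{dydalpha} gives $y(x_t(\a),\a)\sim\frac{\p y}{\p\a}(\O,0)\,\a$ with $\frac{\p y}{\p\a}(\O,0)=-\l\O+c\O^p>0$. The step I expect to be the main obstacle is obtaining the precise order of $M(\a)-\O$: I would derive it by implicitly differentiating the energy identity defining $M(\a)$, namely $h(\a)=\l M^2+\frac{2b}{p+1}M^{p+1}$, twice with respect to $\a$ — the first differentiation degenerates to $0=0$ at $\a=0$ — to get $\big(\frac{\p y}{\p\a}(\O,0)\big)^2=\l(1-p)M'(0)^2$, hence $M(\a)-\O\sim\frac{1}{\sqrt{\l(1-p)}}\frac{\p y}{\p\a}(\O,0)\,\a$.

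With these expansions the two limits fall out. For $I_1$, the ratio $y(x_t(\a),\a)/(x_t(\a)-\O)$ diverges in magnitude (order $\a$ over $o(\a)$), so $h/(x_t-\O)^2\to+\infty$, the arcsine argument tends to $0$, and $I_1\to0$. For $I_2$, the same expansions give $y(x_t(\a),\a)/(M(\a)-\O)\to\sqrt{\l(1-p)}$ and $(x_t(\a)-\O)/(M(\a)-\O)\to0$, whence $h/(M-\O)^2\to\l(1-p)$, the arcsine argument tends to $1$, and $I_2\to\frac{1}{\sqrt{\l(1-p)}}\arcsin(1)=\frac{\pi}{2\sqrt{\l(1-p)}}$. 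Adding the two contributions and restoring the prefactor $2$ yields $\lim_{\a\da0}\tau_1(x_t(\a),\a)=\frac{\pi}{\sqrt{\l(1-p)}}$. The only remaining subtlety is the uniformity of the tail estimates: one must check that $\sum_{j\geq3}|c_j|(M(\a)-\O)^j$ and the analogous sum at $x_t(\a)-\O$ are negligible against the quadratic term, which is immediate once $M(\a)-\O$ and $x_t(\a)-\O$ are known to be $O(\a)$.
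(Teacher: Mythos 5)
Your proposal is correct and follows essentially the same route as the paper's own proof: the same shift of coordinates to $\O$ with Taylor expansion of the nonlinearity, the same splitting of the time integral at $\t=0$ with sandwich bounds on the higher-order tail reducing to arcsine integrals, and the same key step of deriving the expansion $M(\a)-\O\sim\frac{1}{\sqrt{\l(1-p)}}\frac{\p y}{\p\a}(\O,0)\,\a$ by differentiating the energy identity twice (noting the first differentiation degenerates to $0=0$). The asymptotic bookkeeping yielding $I_1\to 0$ and $I_2\to\frac{\pi}{2\sqrt{\l(1-p)}}$ is likewise identical to the paper's argument.
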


\begin{remark}
A posteriori, we observe that the limiting time to connect the tangency point on $\G_0$ to the one on $\G_1$ through the flow of \eqref{eq31}, which has been computed in Proposition \ref{pr43}, coincides with the time of the linearized problem around $(\O,0)$. Indeed, the latter is proportional to angle spanned in the limit, which, by elementary geometrical considerations, is given by
\[\theta=2\left(\pi-\lim_{\a\to 0}\arctan\frac{y(x_t(\a),\a)}{\O-x_t(\a)}\right). \]
To compute the limit in the last expression, we divide \eqref{implicitx_t} by $\O-x_t(\a)$, obtaining
\[\frac{\p y}{\p x}(x_t(\a),\a)\frac{y(x_t(\a),\a)}{\O-x_t(\a)}=\frac{-\l x_t(\a)-b x_t^{p}(\a)}{\O-x_t(\a)}.
\]
Passing to the limit in this expression, we observe, by using de l'H\^{o}pital's rule, that the right-hand side converges to $\l(1-p)>0$ while the first factor in the left-hand side converges to $0$ by Proposition \ref{pr25}(ii). Thus $\frac{y(x_t(\a),\a)}{\O-x_t(\a)}$ converges to $+\infty$, $\theta=\pi$, and the limiting time equals half of a period of the linearization of \eqref{eq31} around $(\O,0)$, i.e. $\frac{\pi}{\sqrt{\l(1-p)}}$.
\end{remark}

We are now able to obtain a high multiplicity result for $\a\sim 0$ by performing a singular perturbation from the case $\a=0$.

\begin{theorem}
	\label{th45}
For every $n\in\N$, set $\l_n:=-\frac{(n\pi)^2}{p-1}$ as in Theorem \ref{th41}. Then, if $\l\in[\l_{n+1},\l_n)$, there exists $\a^*=\a^*(\l)$ such that Problem \eqref{eq11}  has at least $2n+1$ solutions for every $\a\in(0,\a^*)$, each of which converges as $\a\da 0$, uniformly for $t\in[0,1]$, to one of the $2n+1$ solutions that Problem \eqref{eq11} admits for $\a=0$.

\begin{proof}
	First of all, observe that, if we set $\tilde D_1=(0,\O)$, $\tilde D_2=(\O,u_h)$ and $\tilde D_3=[u_h,+\infty)$, Propositions  \ref{pr42} and  \ref{pr25}(i) guarantee that the domains $D_1(\a)$, $D_2(\a)$ and $D_3(\a)$ of the time maps introduced in Section \ref{section3} converge to $\tilde D_1$, $\tilde D_2$ and $\tilde D_3$, respectively, as $\a\to 0$.
	
	In addition, by the differentiable dependence theorem for differential equations, which holds uniformly in compact sets not containing any equilibria, we have that, for $\a\to 0$,
	\begin{align}
	\label{equnif1}
	\tau_j&\to T_j & &\text{ uniformly in compact sets of $\tilde D_1$, for every $j\in\N^*$,} \\
		\label{equnif2}
	\tau_{j+1}&\to T_j & &\text{ uniformly in compact sets of $\tilde D_2$, for every $j\in\N^*$,} \\
	\label{equnif3}
	\tau_{1}&\to 0 & &\text{ uniformly in compact sets of $\tilde D_2\cup\tilde D_3$,}
	\end{align}
	where $T_j$ are the time maps of the case $\a=0$ introduced in the proof of Theorem \ref{th41} (Figure \ref{fig4.2} shows such convergences). In addition, analogous convergence results hold for the derivatives of the time maps with respect to $x$.
	\begin{figure}[ht]
		\begin{center}
			\begin{tabular}{c}
			\includegraphics[scale=0.60]{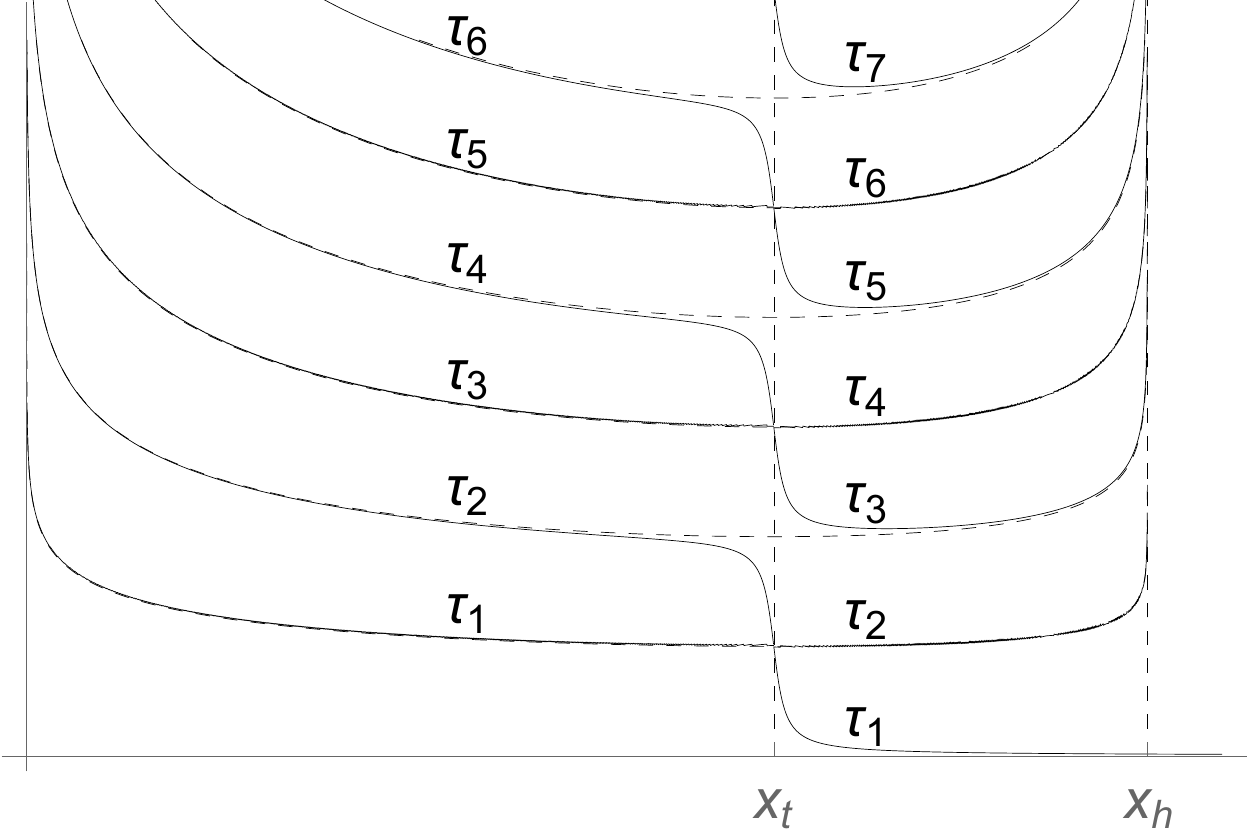}
			\end{tabular} \\
			\caption{Time maps of Problem \eqref{eq11} for $\a\sim 0$. We have represented the time maps $\tau_j$ with continuous lines, while the dashed lines represent the time maps $T_j$ of Figure \ref{fig4.1}.} \label{fig4.2}
		\end{center}
	\end{figure}

	Combining Propositions \ref{pr32}(i),(iv) and \ref{pr43}, we obtain, for every $j\in2\N+1$,
	\begin{equation}
	\label{eq411}
	\lim_{\a\da 0}\tau_j(x_t(\a),\a)=\frac{j\pi}{\sqrt{\l(1-p)}}=\lim_{\a\da 0}\tau_{j+1}(x_t(\a),\a).
	\end{equation}
	Assume now that $\l\in[\l_{n+1},\l_n)$ for some $n\in\N^*$, which is equivalent to
	\begin{equation}
	\label{eq412}
	\frac{n\pi}{\sqrt{\l(1-p)}}<1\leq\frac{(n+1)\pi}{\sqrt{\l(1-p)}}. 
	\end{equation}
	From the proof of Theorem \ref{th41}, there exist $n$ points $\{x_j^-\}_{j=1}^n$ such that $0<x_1^-<\ldots<x_j^-<\ldots<x_n^-<\O$ and
	\begin{equation}
	\label{eq413}
	\{x_1^-,\ldots,x_n^-\}=\bigcup_{j=1}^{n}T_j^{-1}(1)\cap \tilde D_1.
	\end{equation}
	Take $\e> 0$, $\e\sim 0$ such that $(x_1^--\e,x_n^-+\e)\subset \tilde D_1$. Since $1-2\a\to 1$ as $\a\to 0$, \eqref{equnif1}, \eqref{eq413} and the convergence of the derivatives of the time maps with respect to $x$ guarantee that
		\begin{equation}
					\label{solD1}
	\bigcup_{j=1}^{n}\tau_j^{-1}(1-2\a)\cap (x_1^--\e,x_n^-+\e)
	\end{equation}
	has at least $n$ elements for $\a\sim 0$, corresponding to $n$ solutions of Problem \eqref{eq11}, each of which converges to the corresponding one of the case $\a=0$, thanks again to  \eqref{equnif1} and \eqref{eq413}, giving the corresponding uniform convergence for the solutions of Problem \eqref{eq11}.
	
	Similarly, there exist $n$ points $\{x_j^+\}_{j=1}^n$ such that $\O<x_n^+<\ldots<x_j^+<\ldots<x_1^+<u_h$ and
	\begin{equation}
	\label{eq414}
	\{x_1^+,\ldots,x_n^+\}=\bigcup_{j=1}^{n}T_j^{-1}(1)\cap \tilde D_2.
	\end{equation}
	By taking $\e> 0$, $\e\sim 0$ such that $(x_n^+-\e,x_1^++\e)\subset \tilde D_2$, \eqref{equnif2}, \eqref{eq414} and the convergence of the derivatives of the time maps with respect to $x$ now guarantee that
	\begin{equation}
		\label{solD2}
	\bigcup_{j=2}^{n+1}\tau_j^{-1}(1-2\a)\cap (x_n^+-\e,x_1^++\e)
	\end{equation}
	has at least $n$ elements for $\a\sim 0$, providing other $n$ solutions of Problem \eqref{eq11} which converge, as $\a\da 0$, to the corresponding ones of the case $\a=0$.
	
	In summary we have proved that, if $\l\in[\l_{n+1},\l_n)$ for some $n\in\N^*$, Problem \eqref{eq11} admits at least $2n$ solutions in a right neighborhood of $\a=0$. In order to prove the existence of a $(2n+1)$th solution if $\l\in[\l_{n+1},\l_n)$, $n\in\N$, we proceed as follows: set $n=2k+r$ with $k\in\N$ and $r\in\{0,1\}$ and define the function
	\begin{equation}
		\label{eq416}
					\T_k(x,\a):=
		\begin{cases}
			\tau_{2k+2}(x,\a) & \text{ if $0<x< x_t$,} \\
						\tau_{2k+1}(x,\a) & \text{ if $x_t\leq x< x_h$,}
			\end{cases}
		\end{equation}
		which is continue by \eqref{eq37}. Moreover, reducing $\e$ if necessary so that $\O+\e<u_h$ and, when $n\in\N^*$,
		\begin{equation}
			\label{condeps}
			x_n^-+\e<\O-\e<\O+\e<x_n^+-\e,
			\end{equation}
		from \eqref{equnif2} and \eqref{equnif3} we have that
		\begin{equation*}
			\inf_{(\O,\O+\e)}\T_k(\cdot,\a)\to \frac{2k\pi}{\sqrt{\l(1-p)}}\leq \frac{n\pi}{\sqrt{\l(1-p)}}
			\end{equation*}
		as $\a\to 0$, thus \eqref{eq412} gives the existence of $\ov x\in (\O,\O+\e)$ such that $\T_k(\ov x,\a)<1-2\a$ for $\a\sim 0$. On the other hand, Proposition \ref{pr32}(iii) gives that $\lim_{x\da0}\T_k(x,\a)=+\infty$, thus by continuity we obtain, for $\a\sim 0$, the existence of $x(\a)<\ov x<\O+\e$ such that $\T_k(x(\a),\a)=1-2\a$ which, by construction, provides us with a solution of \eqref{eq11}.
		
		To show that this solution is different from the ones found above when $n\in\N^*$, observe that, if $x(\a)\geq x_t(\a)$, \eqref{asymptotics} guarantees that $x(\a)\in(\O-\e,\O+\e)$ for $\a\sim 0$ and thus, this solution cannot coincide with any of the previous solutions, due to \eqref{solD1}, \eqref{solD2} and \eqref{condeps}. If instead $x(\a)< x_t(\a)$, the definition of $\T_k$ and Proposition \ref{pr32}(ii) give that $x(\a)$ satisfies
		\begin{equation}
			\label{eq419}
			1-2\a=\tau_{2k+2}(x(\a),\a)>\tau_{n}(x(\a),\a),
			\end{equation}
		thus $x(\a)$ does not belong to the set \eqref{solD1}. It cannot belong to \eqref{solD2} either, because $x(\a)<\O+\e<x_n^+-\e$.
		
		It only remains to show that this solution of Problem \eqref{eq11} converges to the constant $\O$ as $\a\to 0$. Firstly, we prove that $x(\a)\to\O$. We distinguish again the cases $x(\a)\geq x_t(\a)$ and $x(\a)<x_t(\a)<\O$. In the former, as already remarked above, \eqref{asymptotics} guarantees that $x(\a)\in(\O-\e,\O+\e)$ for $\a\sim 0$ and the claim follows since $\e$ can be arbitrarily small. In the latter, assume by contradiction that $\tilde x:=\limsup_{\a\da 0} x(\a)<\O$. By taking the limsup as $\a\da 0$ in \eqref{eq419}, we obtain from \eqref{equnif1} that $\tilde x$ solves $T_{2k+2}(\cdot)=1$. Nevertheless, in the considered range of $\l$, this equation only has a solution when $n$ is odd and $\l=\l_{n+1}$, and, in such a case,  \eqref{monotoniaT} implies that the solution is $\O$, contradicting $\tilde x<\O$. An analogous argument with the liminf concludes the proof of the claim.
		
		The uniform convergence in $[0,1]$ of the solution of \eqref{eq11} to $\O$ finally follows by combining that $x(\a)\to\O$ with Proposition \ref{pr25}(i) and the fact that the closed orbits of \eqref{eq31} degenerate to $(\O,0)$ when they approach the equilibrium.		
\end{proof}
\end{theorem}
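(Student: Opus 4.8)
The plan is to treat this as a singular perturbation of the purely superlinear problem $\a=0$ of Theorem \ref{th41}, transferring its $2n+1$ solutions to small $\a>0$ via the convergence of the time maps $\tau_j(\cdot,\a)$ to the maps $T_j$. First I would fix the limiting geometry: Proposition \ref{pr42} gives $x_t(\a)\to\O$ and Proposition \ref{pr25}(i) gives $x_h(\a)\to u_h$, so the domains $D_1(\a),D_2(\a),D_3(\a)$ converge to $\tilde D_1=(0,\O)$, $\tilde D_2=(\O,u_h)$ and $\tilde D_3=[u_h,+\infty)$ respectively. Away from the equilibrium $(\O,0)$ the flow of \eqref{eq31} depends differentiably on $\a$, uniformly on compact sets, which yields $\tau_j\to T_j$ on compacts of $\tilde D_1$, $\tau_{j+1}\to T_j$ on compacts of $\tilde D_2$ and $\tau_1\to 0$ on compacts of $\tilde D_2\cup\tilde D_3$, together with the analogous convergence of the $x$-derivatives.

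Next I would recover $2n$ of the solutions. Under $\l\in[\l_{n+1},\l_n)$, Theorem \ref{th41} produces $n$ zeros $x_1^-<\cdots<x_n^-$ in $\tilde D_1$ and $n$ zeros $x_n^+<\cdots<x_1^+$ in $\tilde D_2$ of the equations $T_j(\cdot)=1$, all transversal by the strict monotonicity \eqref{monotoniaT}. Since $1-2\a\to 1$, the uniform convergence of the $\tau_j$ and of their $x$-derivatives makes each transversal intersection persist, so $\tau_j(\cdot,\a)=1-2\a$ has a solution in a small neighbourhood of each $x_j^\pm$ for $\a\sim0$; these $2n$ points lie in disjoint neighbourhoods, hence correspond to distinct solutions of \eqref{eq11} that converge uniformly in $t$ to the associated $\a=0$ solutions.

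The delicate part is the $(2n+1)$th solution, which for $\a=0$ is the constant $u\equiv\O$ sitting at the center, where differentiable dependence fails and the time maps blow up. Here I would exploit Proposition \ref{pr43}, which through Proposition \ref{pr32}(i),(iv) yields $\tau_j(x_t(\a),\a)\to j\pi/\sqrt{\l(1-p)}$ and in particular $\tau_{2j-1}(x_t)=\tau_{2j}(x_t)$; this lets me glue the branch $\tau_{2k+2}$ on $(0,x_t)$ to $\tau_{2k+1}$ on $[x_t,x_h)$ (writing $n=2k+r$) into a continuous function $\T_k(\cdot,\a)$. Using $\tau_{j+1}\to T_j$ on $\tilde D_2$ and $\tau_1\to 0$ on $\tilde D_2\cup\tilde D_3$, I would show that $\inf_{(\O,\O+\e)}\T_k(\cdot,\a)\to 2k\pi/\sqrt{\l(1-p)}<1$, so $\T_k$ drops below $1-2\a$ near $\O$, while Proposition \ref{pr32}(iii) gives $\T_k\to+\infty$ as $x\da0$; an intermediate-value argument then yields $x(\a)$ with $\T_k(x(\a),\a)=1-2\a$, hence a further solution.

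Finally I would check distinctness and the limiting behaviour. Separating $x(\a)\geq x_t(\a)$, where \eqref{asymptotics} forces $x(\a)\in(\O-\e,\O+\e)$, from $x(\a)<x_t(\a)$, where $\T_k=\tau_{2k+2}>\tau_n$ by Proposition \ref{pr32}(ii), shows the new solution is none of the previous $2n$; and a $\limsup$/$\liminf$ argument combined with \eqref{monotoniaT} forces $x(\a)\to\O$, after which the collapse of the closed orbits of \eqref{eq31} onto $(\O,0)$ yields uniform convergence of the solution to $\O$. The main obstacle is exactly this last step: since the perturbation degenerates precisely at $(\O,0)$, the continuation used for the $2n$ outer solutions fails there, and one must control the diverging travel time through the sharp asymptotics of Propositions \ref{pr42}--\ref{pr43} and the continuous patching at the tangency point.
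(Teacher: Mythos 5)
Your proposal is correct and follows essentially the same route as the paper's own proof: the same uniform convergence of the time maps $\tau_j$ (and their $x$-derivatives) to the $T_j$ away from the equilibrium, the same persistence argument producing the $2n$ solutions near the $x_j^\pm$, and the same glued map $\T_k$ with the intermediate-value and $\limsup$/$\liminf$ arguments for the $(2n+1)$th solution and its convergence to $\O$. No gaps beyond the level of detail the paper itself leaves implicit.
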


\begin{remark}
\label{re46}
	Contrarily to Theorem \ref{th41}, in Theorem \ref{th45} we are not able to provide exact multiplicity results, since we do not know the global monotonicities of the time maps $\tau_j$, which was instead the case for the $T_j$.
\end{remark}

\setcounter{equation}{0}
\section{Bifurcation diagrams in $\a$ and general multiplicity results}
\label{section5}
In this section we determine the structure of the global bifurcation diagrams of Problem \eqref{eq11} using $\a$ as the main bifurcation parameter. Moreover, we determine some general multiplicity results which complete the ones obtained in Theorem \ref{th45} for $\a\sim 0$ and Theorem \ref{th31} for $\a=0$. They are contained in the following theorem and represented in Figure \ref{fig5}, where we plot the values of $u(\a)$ on the vertical axis.


\begin{theorem}
	\label{th51}
	Assume that $\l\in[\l_{n+1},\l_n)$ for some $n\in\N$. Then:
	\renewcommand{\theenumi}{\roman{enumi}}
		\begin{enumerate}[(i)]
		\item if $n=0$, the minimal bifurcation diagram in $\a$ for Problem \eqref{eq11} consists of a curve starting from $\{\a=0\}$ and bifurcating from $+\infty$ at $\a=1/2$. Such a curve will be referred to as \emph{principal curve}.
		
		In particular, Problem \eqref{eq11} admits at least a solution for every $\a\in(0,1/2)$ (see Figure \ref{fig5}(A));
		
		\item if $n=1$, the minimal bifurcation diagram in $\a$ for Problem \eqref{eq11} consists of one component containing the principal curve with two additional branches that start from $\{\a=0\}$ and merge in a bifurcation point on the principal curve.
		
		In particular, there exists $\a_1\in(0,1/2)$, such that Problem \eqref{eq11} possesses at least 3 solutions for $\a<\a_{1}$,  and at least 1 solution for $\a_1<\a<1/2$ (see Figure \ref{fig5}(B));
		\item if $n=2k+1$, $k\in\N^*$, the minimal bifurcation diagram in $\a$ for Problem \eqref{eq11} consists of $k+1$ components: one, as in (ii), containing the principal curve with two branches bifurcating from it and reaching the axis $\{\a=0\}$, plus $k$ additional bounded components, each formed by four branches that start from the axis $\{\a=0\}$ and three of which merge in a bifurcation point, while (at least) two of them merge in a subcritical turning point.
		
		In particular, there exist $\{\a_{2j+1}\}_{j=0}^k$ such that $0<\a_{2j_2+1}<\a_{2j_1+1}<1/2$ for $0\leq j_1<j_2\leq k$ and Problem \eqref{eq11} possesses at least $4j+3$ solutions for $\a<\a_{2j+1}$, $j\in\{0,\ldots,k\}$ and at least 1 solution for $\a_1<\a<1/2$ (see Figure \ref{fig5}(D));
		\item if $n=2k$, $k\in\N^*$, the minimal bifurcation diagram in $\a$ for Problem \eqref{eq11} consists of $k+1$ components: one, as in (ii), containing the principal curve with two branches bifurcating from it and reaching the axis $\{\a=0\}$, $k-1$ bounded components as in (iii), each formed by four branches that start from $\{\a=0\}$ and form a subcritical turning point and a bifurcation point, and an additional bounded component formed by two branches that start from the axis $\{\a=0\}$ and merge in a subcritical turning point.
		
		In particular, there exist $\a_{2k}\in(0,1/2)$ and $\{\a_{2j+1}\}_{j=0}^{k-1}$, such that $0<\a_{2j_2+1}<\a_{2j_1+1}<1/2$ for $0\leq j_1<j_2\leq k-1$ and Problem \eqref{eq11} possesses at least $4k+1=2n+1$ for $\a<\min\{\a_{2k},\a_1,\ldots,\a_{2k-1}\}$, at least $4j+3$ solutions for $\a<\a_{2j+1}$, $j\in\{0,\ldots,k-1\}$  and at least one solution for $\a_1<\a<1/2$ (see Figure \ref{fig5}(C)).		
	\end{enumerate}

		\begin{figure}[ht]
	\begin{center}
		\begin{tabular}{cc}
			\includegraphics[scale=0.52]{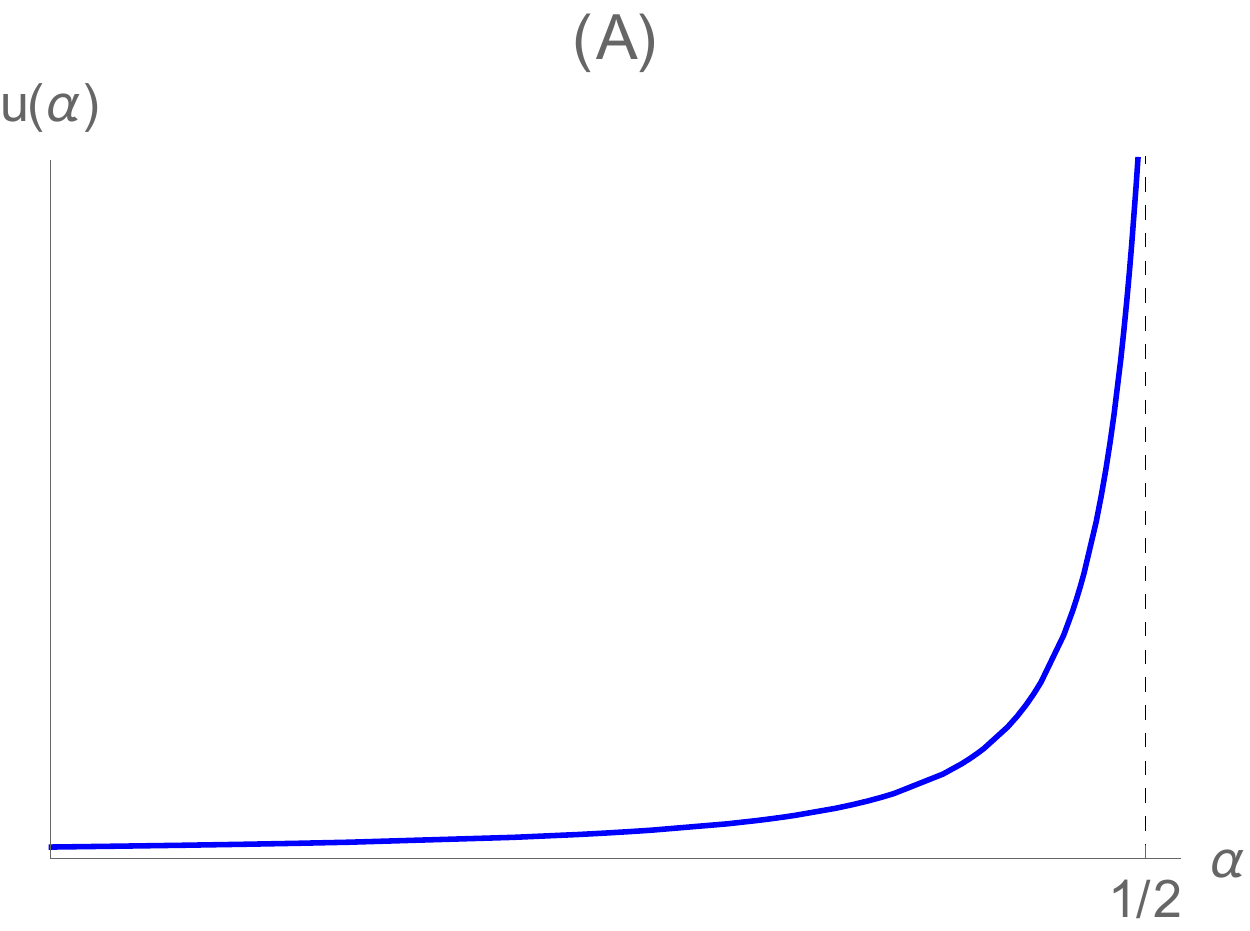} & 			\includegraphics[scale=0.52]{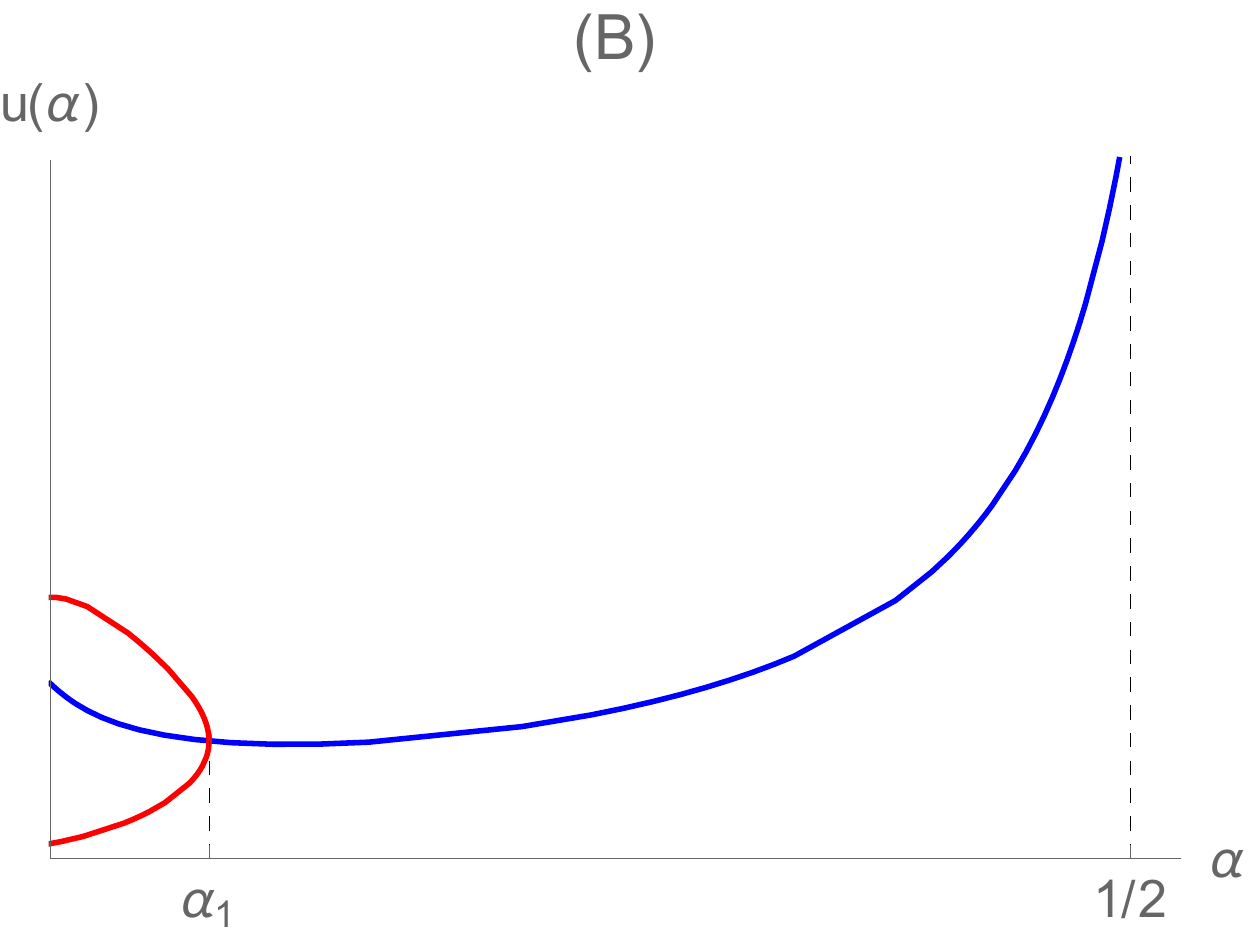} \\
			\includegraphics[scale=0.52]{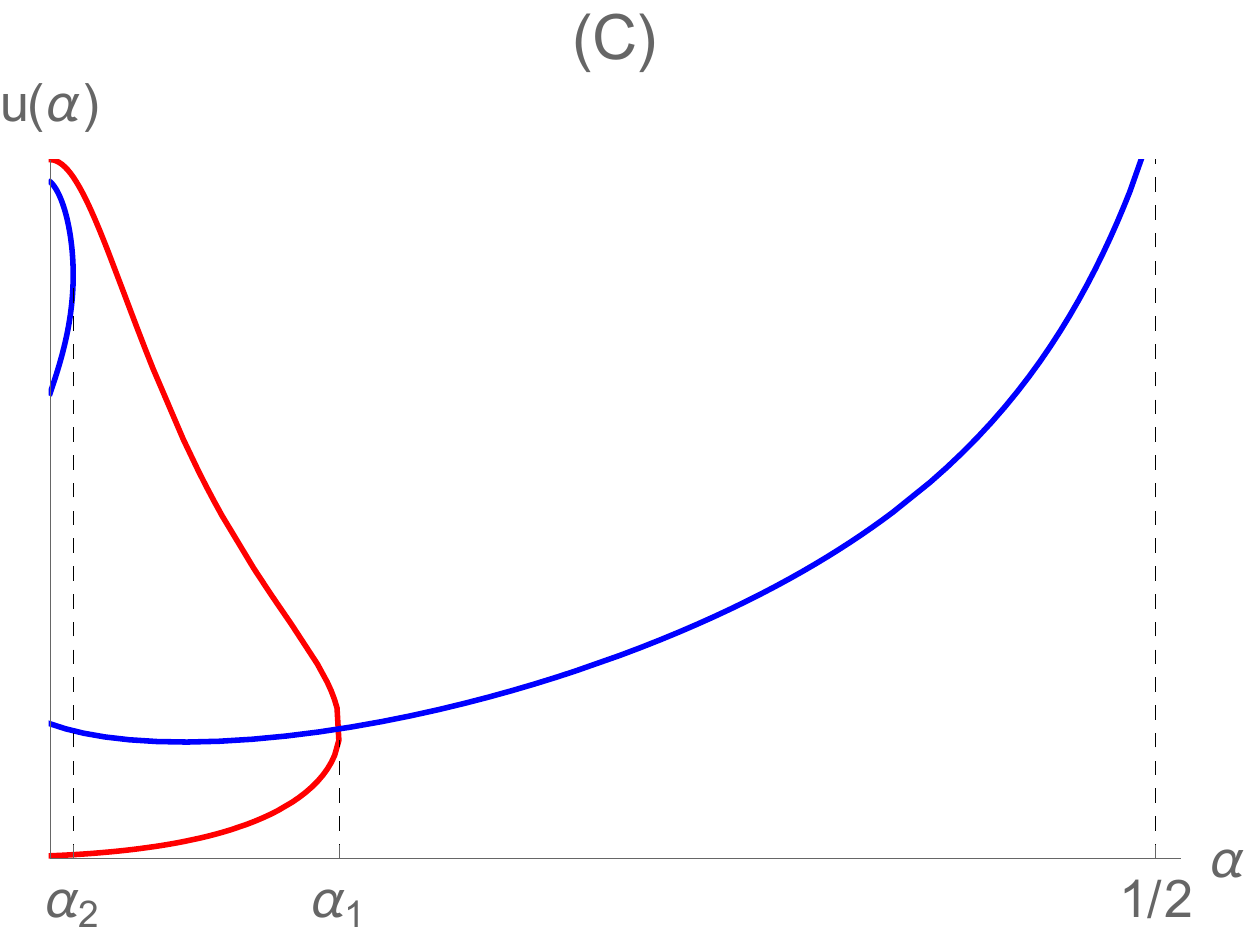} & 			\includegraphics[scale=0.52]{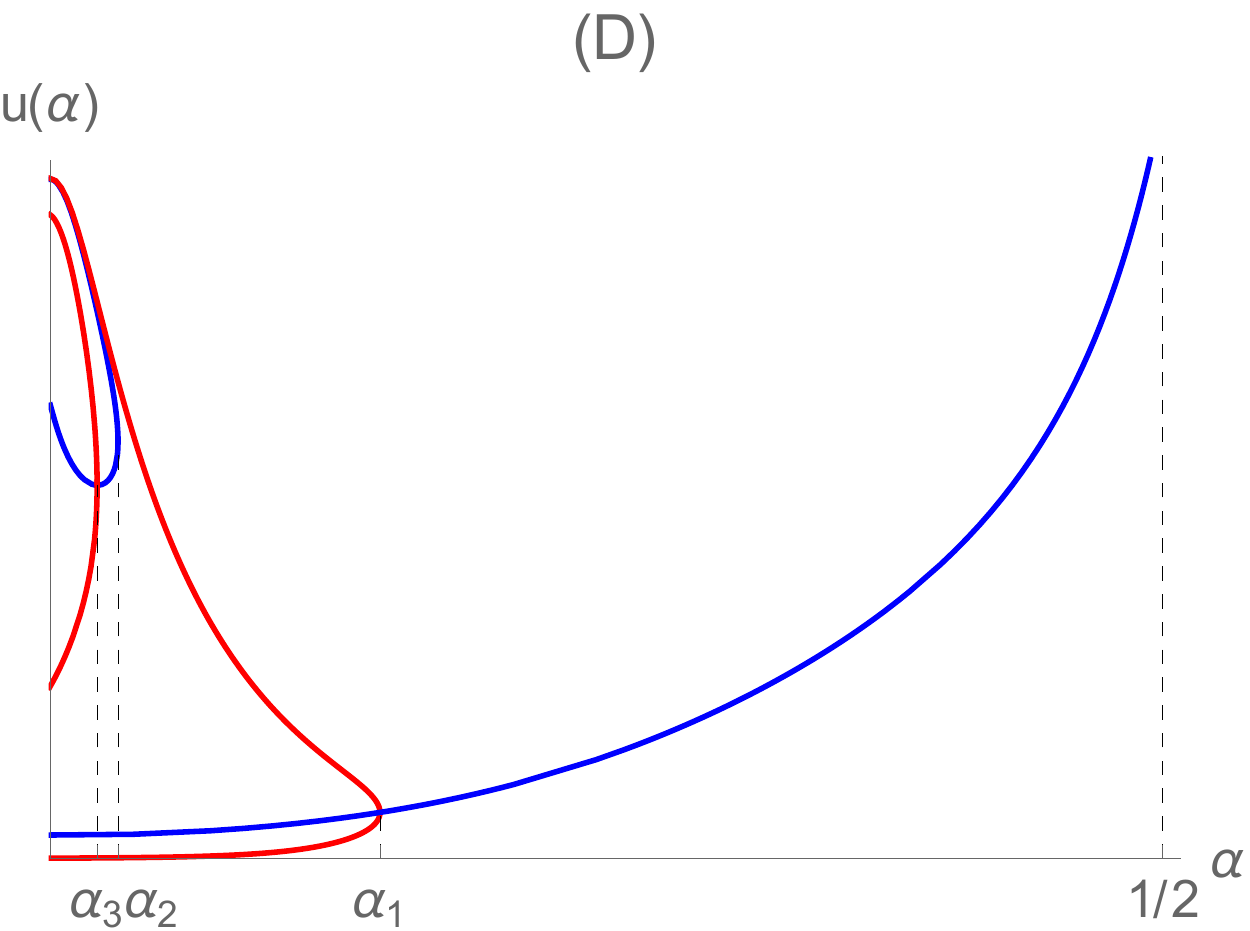}
		\end{tabular} \\
		\caption{Bifurcation diagrams in $\a$ for Problem \eqref{eq11} corresponding to the following cases: (A) $\l\in[\l_1,\l_0)$, (B) $\l\in[\l_2,\l_1)$, (C) $\l\in[\l_3,\l_2)$, (D) $\l\in[\l_4,\l_3)$.} \label{fig5}
	\end{center}
\end{figure}

	\begin{proof}
		(i) Assume that $\l\in[\l_1,\l_0)$. Then, for $\a=0$ Problem \eqref{eq11} admits a unique solution, the constant $\O$, by Theorem \ref{th41}. By Theorem \ref{th45}, there exists a solution for $\a\sim 0$ which converges to it as $\a\da 0$ and is generated by a solution  of $\T_0(\cdot,\a)=1-2\a$, where $\T_0$ is the function defined in \eqref{eq416}. By extending this function by $\tau_1(x,\a)$ for $x\geq x_h(\a)$ (we still denote such an extension by $\T_0$ in order to maintain the notation as simple as possible), since $\T_0$ is continuous and depends continuously on $\a$, Proposition \ref{pr32}(iii) and (v) ensure that such a solution exists for every $\a\in[0,1/2)$ and depends continuously on $\a$, giving the existence of the principal curve in the bifurcation diagram. The fact that it bifurcates from $+\infty$ at $\a=1/2$ follows again by Proposition \ref{pr32}(v), since $1-2\a\da 0$ as $\a\ua1/2$.
		
		(ii) Assume that $\l\in[\l_2,\l_1)$ and consider the set
		\begin{equation*}
		A_{1}:=\{\a\in(0,1/2):\tau_{1}(x_t(\tilde\a),\tilde\a)<1-2\tilde\a \text{ for all $0<\tilde\a<\a$}\}.
		\end{equation*}
		From \eqref{eq411} and \eqref{eq412} we know that $A_{1}\neq\emptyset$ for $\a\sim 0$, and we can therefore define
		\[\a_{1}:=\sup A_{1}>0. \]
		On the other hand, by continuity, we have
		\begin{equation*}
		\liminf_{\a\ua1/2}\tau_{1}(x_t(\a),\a)=\tau_s(x_t(1/2),1/2)>0,
		\end{equation*}
		where $\tau_s$ is given by \eqref{eq35}. Thus, since $1-2\a\da 0$ as $\a\ua1/2$,  $\a\notin A_{1}$ for $\a\sim 1/2$, implying that $\a_{1}<1/2$.
		
		
		It only remains to show the global structure of the bifurcation diagrams and the multiplicity of solutions of Problem \eqref{eq11}. By \eqref{eq411}, \eqref{eq412} and Proposition \ref{pr32}(iii)
		\[\tau_1^{-1}(1-2\a)\cup\tau_2^{-1}(1-2\a) \]
		has at least 3 elements for $\a\sim 0$ forming 3 branches which perturb, by Theorem \ref{th45}, from the corresponding solutions for $\a=0$. By definition of $\a_1$,
		$\tau_{1}(x_t(\a_1),\a_1)=1-2\a_1$ and, since the time maps $\tau_1$ and $\tau_2$ meet at $x=x_t(\a)$,
		such branches meet at $\a=\a_1$ giving rise to a bifurcation point. Moreover, by the definition of $\a_1$ and Proposition \ref{pr32}(v),  one of them can be continued for $\a>\a_1$, forming the principal curve as in part (i).

		(iii) Assume that $\l\in[\l_{n+1},\l_n)$ with $n=2k+1$, $k\in\N^*$. Following the same lines as before, we construct the bifurcation points at the values of $\a$ for which the corresponding time maps meet at $x=x_t(\a)$ taking exactly the value $1-2\a$: consider, for $j\in\N$, the sets
		\begin{equation*}
			A_{2j+1}:=\left\{\a\in(0,1/2):\tau_{2j+1}(x_t(\tilde\a),\tilde\a)<1-2\tilde\a \text{ for all $0<\tilde\a<\a$}\right\},
			\end{equation*}
			observe that \eqref{eq411} and \eqref{eq412} ensure $A_{2j+1}\neq\emptyset$ for $\a\sim 0$ for every $j\in\{0,\ldots,k\}$, and, for such values of $j$, set
			\[\a_{2j+1}:=\sup A_{2j+1}>0. \]
			Proposition \ref{pr32}(ii) gives that $\a_{2j_2+1}<\a_{2j_1+1}$ for $0\leq j_1<j_2\leq k$ and, as a consequence, for every $j\in\{0,\ldots,k\}$, $\a_{2j+1}\leq\a_1<1/2$, as above. 

				In addition, for $j\in\N^*$, we consider the sets
				\begin{equation*}
					B_{2j}:=\left\{\a\in(0,1/2):\inf_{(0,x_h(\tilde\a))}\T_{j}(\cdot,\tilde\a)\leq 1-2\tilde\a \text{ for all $0<\tilde\a<\a$}\right\},
				\end{equation*}
				where $\T_j$ are the functions defined in \eqref{eq416}. As before, for $j\in\{1,\ldots,k\}$ these sets are non-empty for $\a\sim 0$, and we can define
				\[\a_{2j}:=\sup B_{2j}>0, \]
				which, due to Proposition \ref{pr32}(ii), satisfy $\a_{2j_2}<\a_{2j_1}$ for $1\leq j_1<j_2\leq k$.
				In addition, by introducing, for every $j\in\N$,
				\begin{equation*}
					\tilde\T_j(x,\a):=
					\begin{cases}
						\tau_{2j+1}(x,\a) & \text{ if $0<x< x_t$,} \\
						\tau_{2j+2}(x,\a) & \text{ if $x_t\leq x< x_h$,}
					\end{cases}
				\end{equation*}
				we have that, for $j\in\N^*$,
				\begin{equation*}
					\liminf_{\a\ua1/2}\inf_{(0,x_h(\a))}\T_{j}(\cdot,\a)\geq\liminf_{\a\ua1/2}\inf_{(0,x_h(\a))}\tilde\T_{0}(\cdot,\a)=\inf_{(0,x_h(1/2))}\tilde\T_{0}(\cdot,1/2)>0,
				\end{equation*}	
				thus $\a_{2j}<1/2$ for every $j\in\{1,\ldots,k\}$.
				Moreover, by \eqref{eq416} and the definition of the $\a_{2j+1}$ given above, we have, for $0<\tilde\a<\a_{2j+1}$,
				\[1-2\tilde\a>\tau_{2j+1}(x_t(\tilde\a),\tilde\a)\geq\inf_{(0,x_h(\tilde{\a}))}\T_j(\cdot,\tilde\a), \]
				thus $\a_{2j+1}\in B_{2j}$ and $\a_{2j+1}\leq\a_{2j}$ for every $j\in\{1,\ldots,k\}$.

				By construction and Proposition \ref{pr32}(iii), at $\a=\a_{2j}$ the bifurcation diagram presents a subcritical turning point, since the equation $\T_j(\cdot,\a)=1-2\a$ has solutions in a neighborhood of $\a_{2j}$ for $\a\leq\a_{2j}$, and no solution for $\a>\a_{2j}$.
				
				Passing to the global structure of the diagrams and the multiplicity of solutions of Problem \eqref{eq11}, by \eqref{eq411}, \eqref{eq412} and Proposition \ref{pr32}(iii)
				\[\T_j^{-1}(1-2\a)\cup\tilde\T_j^{-1}(1-2\a) \]
				has at least 4 elements for every $j\in\{1,\ldots,k\}$ for $\a\sim 0$. They depend continuously (actually differentiably) on $\a$, forming 4 branches of solutions which perturb from the corresponding solutions that the problem admits for $\a=0$, thanks to Theorem \ref{th45}. By construction, three of such branches join at $\a=\a_{2j+1}$, and two can be continued up to $\a=\a_{2j}$. In this way, for every $j\in\{1,\ldots,k\}$, we have constructed in the bifurcation diagram a component $C_j$ which is bounded (by continuity and since $\a_{2j+1}\leq\a_{2j}<1/2$) and contains at least 4 solutions for $\a<\a_{2j+1}$.
				
				Finally, exactly as shown in part (ii),
				\[\T_0^{-1}(1-2\a)\cup\tilde\T_0^{-1}(1-2\a) \]
				has at least 3 elements for $\a\sim 0$ forming an additional component with a bifurcation point at $\a=\a_1$ on the principal curve.
				
				(iv) The proof follows the same lines of part (iii), with the only difference being that, when $\l\in[\l_{n+1},\l_n)$ with $n=2k$, $k\in\N^*$, we are not able to prove the existence of the first bifurcation point $\a_{2k+1}$ of part (iii) and in general
				\[\T_k^{-1}(1-2\a)\cup\tilde\T_k^{-1}(1-2\a) \]
				may contain only 2 elements	for $\a\sim 0$, which give rise to two branches of solutions perturbing from $\a=0$ and matching in a subcritical turning point at $\a_{2k}$.
		\end{proof}
	\end{theorem}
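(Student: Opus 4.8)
The plan is to translate the existence of positive solutions into the solvability of the equations $\tau_j(x,\a)=1-2\a$ on the appropriate domains $D_1(\a)\cup D_2(\a)\cup D_3(\a)$, as already set up in Section \ref{section3}, and then to track how the solution set evolves as $\a$ ranges over $(0,1/2)$. The backbone of every diagram is the \emph{principal curve}: extending the composite map $\T_0$ of \eqref{eq416} by $\tau_1(x,\a)$ for $x\geq x_h(\a)$, one obtains a function jointly continuous in $(x,\a)$ that diverges to $+\infty$ as $x\da 0$ by Proposition \ref{pr32}(iii) and tends to $0$ as $x\to+\infty$ by Proposition \ref{pr32}(v). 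Since $1-2\a\in(0,1)$ for every $\a\in(0,1/2)$, the intermediate value theorem yields one solution for each such $\a$, depending continuously on it, which bifurcates from $+\infty$ as $\a\ua 1/2$ because $1-2\a\da 0$ there. This settles part (i) and supplies the principal curve in the remaining cases.

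To locate the bifurcation and turning points I would introduce, for each relevant $j$, the threshold values
\[
\a_{2j+1}:=\sup A_{2j+1},\qquad \a_{2j}:=\sup B_{2j},
\]
where $A_{2j+1}$ collects those $\a$ for which $\tau_{2j+1}(x_t(\tilde\a),\tilde\a)<1-2\tilde\a$ for all $0<\tilde\a<\a$, and $B_{2j}$ those for which $\inf_{(0,x_h(\tilde\a))}\T_j(\cdot,\tilde\a)\leq 1-2\tilde\a$ for all $0<\tilde\a<\a$, with $\T_j$ built as in \eqref{eq416}. The $\a\da 0$ asymptotics \eqref{eq411} together with the defining condition \eqref{eq412} of the range $[\l_{n+1},\l_n)$ guarantee that these sets are non-empty for $\a\sim 0$; a positive lower bound of the form $\tau_s(x_t(1/2),1/2)>0$ furnished by \eqref{eq35} shows each threshold is strictly below $1/2$; and Proposition \ref{pr32}(ii) orders them. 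At $\a=\a_{2j+1}$ the identity \eqref{eq37}, which makes $\tau_{2j+1}$ and $\tau_{2j+2}$ coincide at $x=x_t(\a)$, forces two branches to collide in a \emph{bifurcation point}; at $\a=\a_{2j}$ the infimum of $\T_j$ meets the level $1-2\a$ and, by Proposition \ref{pr32}(iii), solutions persist for $\a\leq\a_{2j}$ but not beyond, producing a \emph{subcritical turning point}.

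With the critical values in hand, the global counting follows by seeding branches at $\a=0$: Theorem \ref{th45} provides the $2n+1$ solutions for $\a\sim 0$, each converging to one of the $2n+1$ solutions of the case $\a=0$, and continuity (Proposition \ref{pr32}, together with the differentiable dependence already exploited in Theorem \ref{th45}) lets me continue them in $\a$. For $\l\in[\l_{n+1},\l_n)$ with $n=2k+1$ each pair of equations $\tau_{2j+1}(\cdot,\a)=1-2\a$ and $\tau_{2j+2}(\cdot,\a)=1-2\a$ contributes four branches that fuse at $\a_{2j+1}$ (three of them via the bifurcation point, where the two maps agree at $x_t$) and terminate at the turning point $\a_{2j}$, yielding the $k$ bounded components; the solutions of $\tau_1(\cdot,\a)=1-2\a$ and $\tau_2(\cdot,\a)=1-2\a$ attach to the principal curve exactly as in part (ii). The ordering $\a_{2j+1}\leq\a_{2j}<1/2$ makes every such component bounded, and comparing the defining inequalities shows they are mutually disjoint, hence isolated.

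The principal difficulty, and the source of the odd/even dichotomy between parts (iii) and (iv), is the topmost component. When $n=2k$ the inequality \eqref{eq412} only yields $\frac{(2k+1)\pi}{\sqrt{\l(1-p)}}\geq 1$, so $\lim_{\a\da 0}\tau_{2k+1}(x_t(\a),\a)\geq 1>1-2\a$ and the set $A_{2k+1}$ may be empty; consequently the bifurcation point $\a_{2k+1}$ need not exist and the combined solution set of $\tau_{2k+1}(\cdot,\a)=1-2\a$ and $\tau_{2k+2}(\cdot,\a)=1-2\a$ may reduce to only two branches. These still meet in a subcritical turning point at $\a_{2k}$, but form an isolated two-branch component rather than the four-branch one of the odd case. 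Making this distinction rigorous, certifying exactly when the extra bifurcation point is present and ruling out spurious intersections between neighbouring components, is the delicate part of the argument, and is precisely where the strict versus non-strict inequality in \eqref{eq412} becomes decisive.
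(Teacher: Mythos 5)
Your proposal is correct and follows essentially the same route as the paper's own proof: the extended map $\T_0$ with Proposition \ref{pr32}(iii),(v) for the principal curve, the sets $A_{2j+1}$ and $B_{2j}$ with thresholds $\a_{2j+1}=\sup A_{2j+1}$, $\a_{2j}=\sup B_{2j}$ ordered via Proposition \ref{pr32}(ii) and bounded away from $1/2$ by the persistence of the time maps as $\a\ua 1/2$, the four-branch/three-branch/two-branch counting seeded by Theorem \ref{th45}, and the same explanation of the odd/even dichotomy through the non-strict inequality in \eqref{eq412} making $A_{2k+1}$ possibly empty. The only cosmetic difference is that for $\a_{2j}<1/2$ the paper bounds $\inf_{(0,x_h(\a))}\T_j(\cdot,\a)$ from below by $\inf\tilde\T_0(\cdot,1/2)$ via the auxiliary maps $\tilde\T_j$, rather than by the single value $\tau_s(x_t(1/2),1/2)$ you cite, but the idea is identical.
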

	
	
	As already pointed out in Remark \ref{re46}, in order to obtain exact multiplicity results for Problem \eqref{eq11}, one should establish the global monotonicities of the time maps $\tau_j$. Moreover, in order to obtain the precise bifurcation diagrams in $\a$, one should also study the dependence of such time maps with respect to $\a$. Both these aspects seem out of reach at present. The bifurcation diagrams of Figure \ref{fig5}, which have been computed numerically by using a path-following continuation method applied to a Fourier--Galerkin spectral discretization of Problem \eqref{eq11} and obviously adjust to the patterns described in Theorem \ref{th51}, show what we expect to be the sharpest results that one should be able to obtain for Problem \eqref{eq11}--\eqref{eq12}. Observe that, the bifurcation parameter being a point of the domain, collocation methods, which are much lighter from the computational point of view, cannot be applied to discretize our problem, as it was for example the case in \cite{LMT4} for the Dirichlet problem studied in \cite{LTZ}, and one has to consider a Fourier--Galerkin method, especially for the computation of the derivative of the discretized problem with respect to $\a$, as indicated by \cite{MM}. This makes the computation of the diagrams much more delicate also from the numerical point of view.
	
	We conclude by describing the corresponding results that one can obtain for Problem \eqref{eq11} in the case of an asymmetric piecewise constant weight $a(t)$.
	
	\begin{remark}
		\label{re52}
		It is possible to adapt the analysis of \cite{LT, T15} to study the case in which the weight function $a(t)$ in \eqref{eq11} is asymmetric of the form
		\begin{equation}
			\label{eqasymmetricweight}
			a(t):=\left\{ \begin{array}{ll} -c_0 & \hbox{if}\;\;
				t\in(0,\alpha), \cr
				b & \hbox{if}\;\; t\in[\alpha,1-\alpha], \cr
				-c_1 & \hbox{if}\;\;
				t\in(1-\alpha,1)
			\end{array}\right.
		\end{equation}
		with $0<c_0\neq c_1>0$. To do so, one essentially has to study the dependence on $c$ of the curves $\G_0$ and $\G_1$ and how the time maps introduced in Section \ref{section3} change in the asymmetric situation, considering separately the cases $c_0>c_1$ and $c_0<c_1$.
		
		The multiplicity result of Theorem \ref{th41}, related to the purely superlinear problem, is still valid,  while the multiplicity for $\a\sim 0$ follows almost along the same lines of Section \ref{section4}, the only difference lying in the structure of the time maps, but the perturbation result remains similar.
		
		The main difference is related to the structure of the global bifurcation diagrams, since now the secondary bifurcations constructed in Theorem \ref{th51} break, giving rise to additional isolated components, as it can be seen by comparing Figure \ref{fig6} with Figure \ref{fig5}(D).
		
		Moreover, following the steps of \cite{T15}, it is possible to show that, when $c_1\to c_0$ the bifurcation diagrams converge, locally uniformly in the complement of the bifurcation points, to the ones of the symmetric case and that, for $c_0\sim c_1$, imperfect bifurcations occur.
		
			\begin{figure}[ht]
				\begin{center}
					\begin{tabular}{cc}
						\includegraphics[scale=0.52]{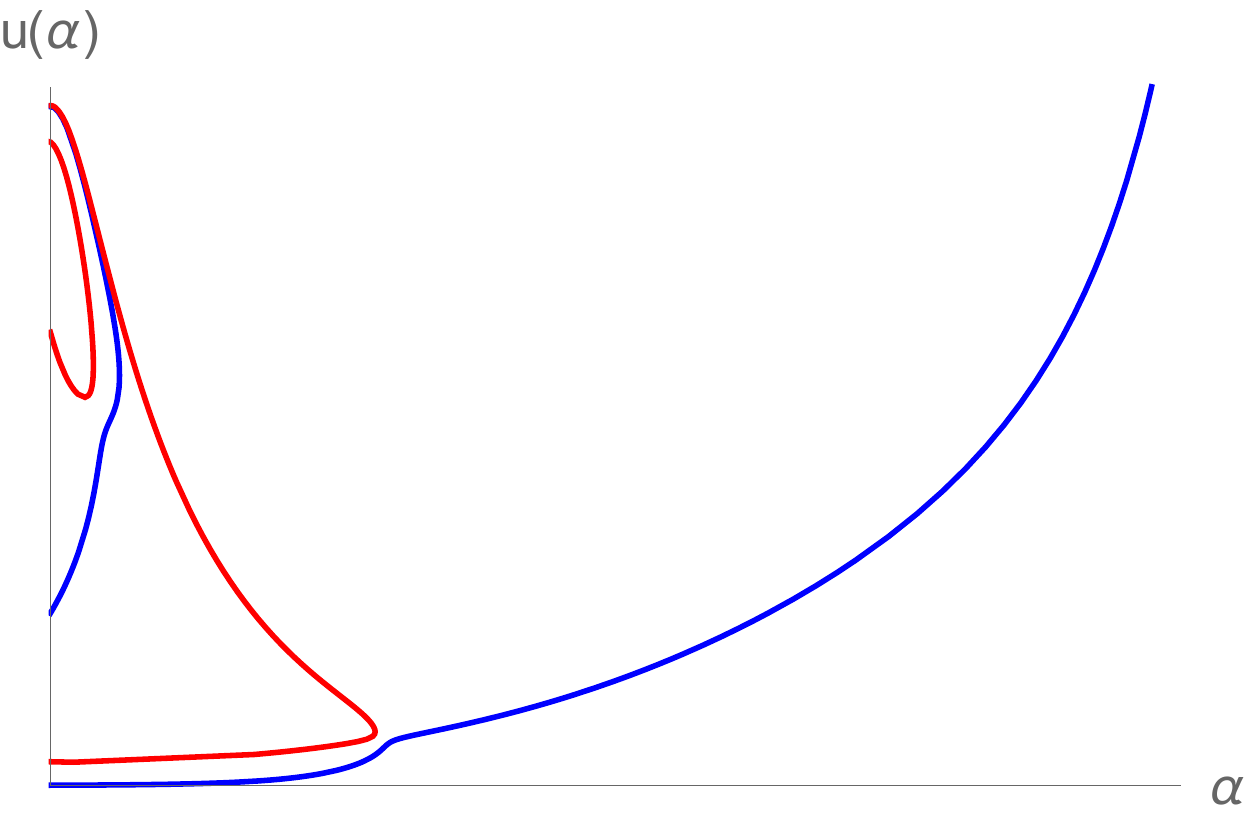} & 			\includegraphics[scale=0.52]{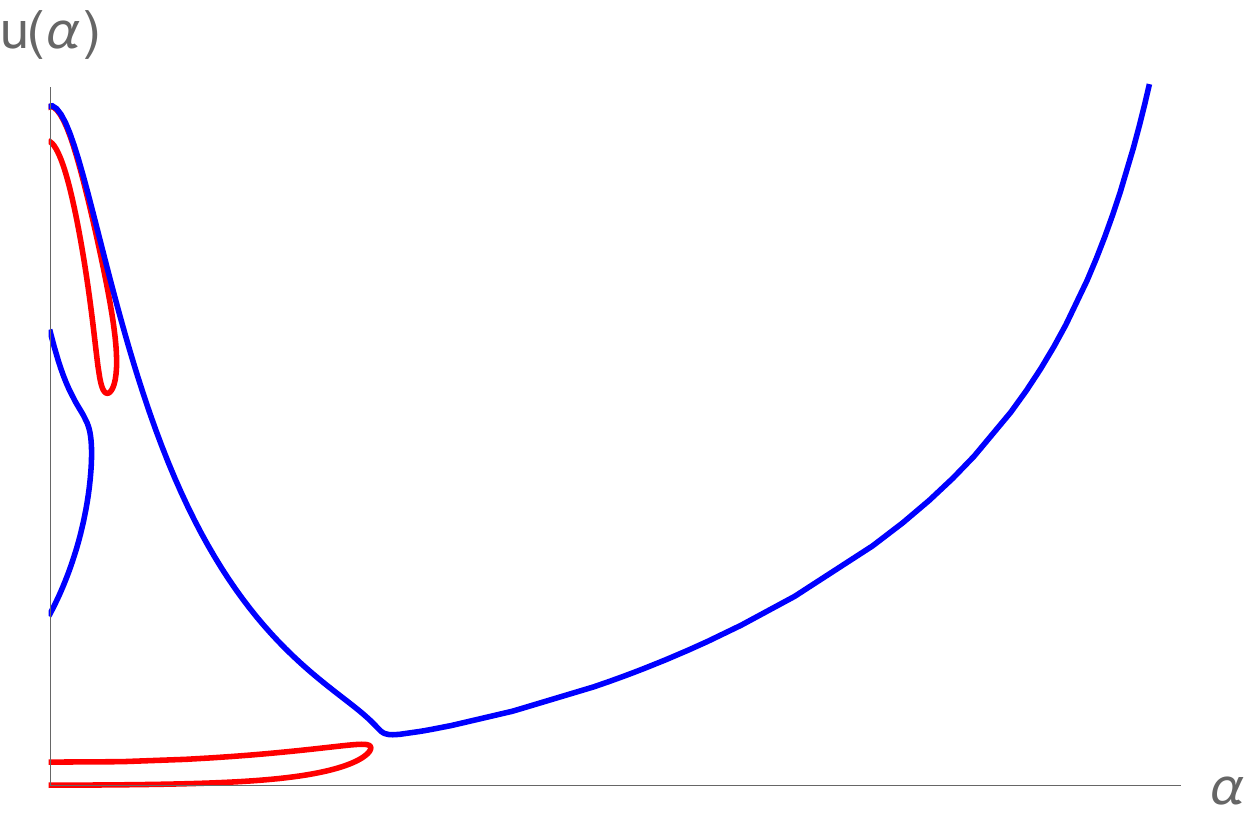}
					\end{tabular} \\
					\caption{Bifurcation diagrams in $\a$ for Problem \eqref{eq11} with weight \eqref{eqasymmetricweight} and $\l\in[\l_4,\l_3)$: case $c_0>c_1$ (left) and case $c_0<c_1$ (right).} \label{fig6}
				\end{center}
			\end{figure}
		\end{remark}
		
%
%
%
%

\section*{Acknowledgments}
I wish to thank Prof. J. L\'opez-G\'omez for the enlightening discussions during the preparation of this work, in particular for the idea of using $\a$ as the main bifurcation parameter, and Prof. M. Molina-Meyer for the suggestions in the implementation of spectral methods for obtaining the bifurcation diagrams through  numerical continuation.

This research has been supported by the European Research
Council under the European Union's Seventh Framework Programme (FP/2007--2013) /
ERC Grant Agreement n.321186 - ReaDi \lq\lq Reaction-Diffusion E\-qua\-tions, Propagation and
Modelling", by the Region \^Ile-de-France under DIM 2014 \lq\lq Diffusion heterogeneities in different spatial dimensions and applications to ecology and medicine" through the Institute of Complex Systems of Paris \^Ile-de-France, by the French National Research Agency under the project \lq\lq NONLOCAL\rq\rq (ANR-14-CE25-0013), and by the Spanish Ministry of Economy, Industry and Competitiveness through project MTM2015-65899-P and contract Juan de la Cierva Incorporaci\'on IJCI-2015-25084.

\end{document}